\documentclass[preprint,1p,number,sort&compress]{elsarticle}

\usepackage{enumitem}
\usepackage[caption=false]{subfig}
\usepackage{graphicx}
\usepackage{amsmath,amsfonts,amsthm}

\usepackage{pgfplots}
\pgfplotsset{compat=newest}
\usetikzlibrary{plotmarks}
\usetikzlibrary{arrows.meta}
\usepgfplotslibrary{patchplots}
\usepackage{grffile}

\usepackage{hyperref}
\usepackage{cleveref}

\newtheorem{lemma}{Lemma}
\newtheorem{proposition}{Proposition}
\newtheorem{theorem}{Theorem}
\newtheorem{assumption}{Assumption}
\newtheorem{remark}{Remark}

\crefname{assumption}{assumption}{assumptions}

\crefname{remark}{remark}{remarks}

\crefname{equation}{}{} 

\newcommand{\rme}{\mathrm{e}}

\newcommand{\norm}[1]{\lVert #1 \rVert}



\begin{document}
\myfooter[L]{}
\begin{frontmatter}

\title{On the convergence of split exponential integrators
       for semilinear parabolic problems}

\author[1]{Marco Caliari}
\ead{marco.caliari@univr.it}
\author[1,2]{Fabio Cassini\corref{cor1}}
\ead{fabio.cassini@univr.it, cassini@altamatematica.it}
\author[3]{Lukas Einkemmer}
\ead{lukas.einkemmer@uibk.ac.at}
\author[3]{Alexander Ostermann}
\ead{alexander.ostermann@uibk.ac.at}
%
\affiliation[1]{organization={Department of Computer Science,
    University of Verona},addressline={Strada Le Grazie, 15},
  postcode={37134},
  city={Verona},
country={Italy}}
\affiliation[2]{organization={Istituto Nazionale di Alta Matematica},
  addressline={Piazzale Aldo Moro, 5},
  postcode={00185},
  city={Roma},
  country={Italy}}
\affiliation[3]{organization={Department of Mathematics,
    University of Innsbruck},addressline={Technikerstrasse, 13},
  postcode={A-6020},
  city={Innsbruck},
country={Austria}}

\begin{abstract}
Splitting the exponential-like $\varphi$ functions, which typically
appear in exponential integrators, is attractive in many situations
since it can dramatically reduce the computational cost of the procedure.
However, depending on the employed splitting, this can result in order
reduction.
The aim of this paper is to analyze different such split approximations.
We perform the analysis for semilinear problems in the abstract framework of
commuting semigroups and
derive error bounds that depend, in particular,
on whether the vector (to which the $\varphi$ functions are applied)
satisfies appropriate boundary conditions.
We then present the convergence
analysis for two split versions of a second-order
exponential Runge--Kutta integrator in the context of
analytic semigroups, and show that one suffers from order reduction while
the other does not.
Numerical results for semidiscretized parabolic PDEs
confirm the theoretical findings.
\end{abstract}

\begin{keyword}
Exponential integrators \sep
stiff equations \sep
split approximations \sep
order reduction \sep
semilinear parabolic problems
\end{keyword}
\end{frontmatter}


\section{Introduction}\label{sec:intro}

The effective numerical time integration of stiff evolution
differential equations is a highly challenging and widely
studied topic in numerical analysis. In this context,
exponential integrators have proven to efficient methods.
Indeed, the numerical solution of many physically relevant equations
has been computed by this kind of schemes.
We mention here, among others,
magnetohydrodynamic equations, models for meteorology
and elastodynamics, hyperbolic problems, and
various types of advection-diffusion-reaction,
Schr\"odinger  and complex Ginzburg--Landau 
equations
(see, e.g., \cite{CEM20,ETL17,HL03,KT05,LPR19,MLT17,ZZS23}).
Researchers have also put a lot of effort in the theoretical study of
integrators of exponential type, and we refer to the
seminal survey~\cite{HO10} for a comprehensive overview.

In this manuscript we are interested in semilinear
parabolic problems of the type
\begin{equation}\label{eq:PDE}
  \left\{
  \begin{aligned}
  \partial_t u(t)&=(A+B)u(t)+g(u(t)), \quad t\in (0,T],\\
u(0)&=u_0,
\end{aligned}
\right.
\end{equation}
where $u$ is the unknown,
$A$ and $B$ are differential
operators that \emph{commute} (for example,
directional operators like $A=\partial_{xx}$ and
$B=\partial_{yy}$ with homogeneous Dirichlet boundary conditions
on a square domain) and $g$ is
the nonlinear term.
Note that the operators in~\cref{eq:PDE} are always understood with boundary
conditions, and that the unknown, the initial condition and the nonlinear term
may also depend on the spatial variables
(the dependence is omitted for simplicity of notation).
The analytical solution of \cref{eq:PDE}
can be written by the variation-of-constants formula as
\begin{equation}\label{eq:voc}
  u(t)=\rme^{t(A+B)}u_0+
  \int_0^t\rme^{(t-s)(A+B)}g(u(s))ds,
\end{equation}
which is usually the starting point for the development of exponential
integrators.
The commutativity of the operators $A$ and $B$
implies
\begin{equation}\label{eq:expcomm}
  \rme^{t(A+B)}u_0=\rme^{tA}\rme^{tB}u_0=
  \rme^{tB}\rme^{tA}u_0.
\end{equation}
Note that if we consider $g(u(t))\equiv v$ in \cref{eq:PDE},
the solution of the (linear) problem can be written
as
\begin{equation}\label{eq:sollinear}
  u(t)=\rme^{tA}\rme^{tB}u_0+t\varphi_1(t(A+B))v,
  \quad \varphi_1(z) =
  \int_0^1\rme^{(1-\theta)z}d\theta.
  \end{equation}
Clearly, even if $A$ and $B$ do commute, the function $\varphi_1$ lacks
the semigroup property, and in general
\begin{equation}\label{eq:phicomm}
  t\varphi_1(t(A+B))v\ne t\varphi_1(tA)\varphi_1(tB)v=
  t\varphi_1(tB)\varphi_1(tA)v.
\end{equation}
On the other hand, when applying exponential integrators
to systems of stiff
ODEs arising from the semidiscretization in space of problems of
type \cref{eq:PDE},
it could be computationally attractive to employ split formulas
such as the
second (or the third) expression in \cref{eq:expcomm,eq:phicomm}
(see, for example, \cite{CC24bis,CCEOZ22,C24}).
Indeed, a classical way to recover an approximation
to solution \cref{eq:sollinear} is to apply the Strang splitting
scheme
to the exact flows of
\begin{equation*}
  \partial_t u_a(t)=Au_a(t),\quad
  \partial_t u_b(t)=Bu_b(t)+v,
\end{equation*}
obtaining at the first time step $t_1=\tau$
\begin{equation}\label{eq:Strang1}
  u(\tau) \approx u_1=\rme^{\tau A}\rme^{\tau B}u_0+
  \tau\rme^{\frac{\tau}{2}A}\varphi_1(\tau B)v.
\end{equation}
\begin{figure}[!htb]
  \centering
%
%
\begin{tikzpicture}

\begin{axis}[%
width=2.5in,
height=1.7in,
at={(0.758in,0.481in)},
scale only axis,
xmode=log,
xmin=1e-5,
xmax=1e-1,
xminorticks=true,
xlabel style={font=\color{white!15!black}},
xlabel={$\tau$},
ymode=log,
ymin=1e-11,
ymax=1e-2,
yminorticks=true,
ylabel style={font=\color{white!15!black}},
ylabel={Local error},
axis background/.style={fill=white},
legend style={at={(0.7,0.1)}, anchor=south west, legend cell align=left, align=left, draw=white!15!black},
legend columns = 1,
]

\addplot [color=magenta, only marks, line width=1pt, mark size=3.5pt, mark=asterisk, mark options={solid, magenta}]
  table[row sep=crcr]{%
0.015625	0.000196800256324142\\
0.00390625	6.59571613521527e-06\\
0.000976562499999999	1.43283760789688e-07\\
0.000244140625	2.54403616073831e-09\\
6.10351562499999e-05	4.18051814846592e-11\\
};
\addlegendentry{$p=3$}

\addplot [color=black, line width = 1pt, forget plot]
  table[row sep=crcr]{%
0.000976562499999999	3.582094019742200e-08\\
6.10351562499999e-05	1.045129537116480e-11\\
};
\addplot [color=cyan, only marks, line width=1pt, mark size = 3.5pt, mark=+, mark options={solid, cyan}]
  table[row sep=crcr]{%
0.015625	0.000138075014444178\\
0.00390625	1.06062873722825e-05\\
0.000976562499999999	6.68820123067853e-07\\
0.000244140625	4.10537942635501e-08\\
6.10351562499999e-05	2.41796032995906e-09\\
};
\addlegendentry{$p=2$}


\addplot [color=blue, only marks, line width=1pt, mark size= 3.5pt,mark=x, mark options={solid, blue}]
  table[row sep=crcr]{%
0.015625	5.35166573935856e-05\\
0.00390625	2.82727222487009e-06\\
0.000976562499999999	1.68582237085288e-07\\
0.000244140625	1.02720121362514e-08\\
6.10351562499999e-05	6.04527877615975e-10\\
};
\addlegendentry{$p=1$}
\addplot [color=black, dashed, line width=1pt,forget plot]
  table[row sep=crcr]{%
0.000976562499999999	 1.685822370852880e-06\\
6.10351562499999e-05	6.045278776159751e-09\\
};

\addplot [color=red, only marks, line width=1pt, mark size = 3.5pt, mark=o, mark options={solid, red}]
  table[row sep=crcr]{%
0.015625	0.00117872951576602\\
0.00390625	0.000294330494825977\\
0.000976562499999999	7.32604967307338e-05\\
0.000244140625	1.7962482105881e-05\\
6.103515625e-05	4.1755986256652e-06\\
};
\addlegendentry{$p=0$}

\addplot [color=black, dotted, line width=1pt, forget plot]
  table[row sep=crcr]{%
0.000976562499999999	2.093157049449537e-05\\
6.103515625e-05	1.193028178761486e-06\\
};

\end{axis}
\end{tikzpicture}
  \caption{Observed decay rate of the local error
    $\lVert u_1-u(\tau)\rVert_\infty$
    (see formula~\cref{eq:Strang1})
    for the function $v(x,y)=(16x(1-x)y(1-y))^p$, with $p=0,1,2,3$.
    The spatial discretization is performed with $250^2$ interior points
    and standard second-order finite differences.
    The slope of the dotted line is one, that of the dashed line is
    two and that of the solid line is three.}
  \label{fig:motivating}
\end{figure}
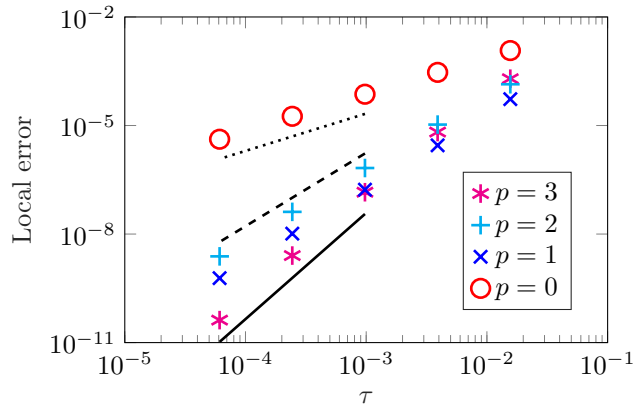

It is easy to verify numerically
that such an approximation is subject to order reduction if the
function $v$ and/or its derivatives do not satisfy
the boundary conditions.
Indeed, consider $A=\partial_{xx}$ and
$B=\partial_{yy}$ in the square domain $\Omega=(0,1)^2$ with homogeneous
Dirichlet boundary conditions, and $v(x,y) =(16x(1-x)y(1-y))^p$,
for $p=0,1,2,3$. Then, we clearly see in \cref{fig:motivating} that
the decay rate of the local error in the approximation of $u(\tau)$
is three (as expected) only for $p=3$, it drops to two for $p=2$ or $p=1$
and to one for $p=0$.
The order reduction phenomenon due to the presence of terms that do not satisfy
boundary conditions is actually well known in the literature, both in the
context of splitting~\cite{EO15b,EO16} and of exponential integrators
\cite{AMCR17,CR22}.
This very simple example motivates the investigation of
splitting approximations for the $\varphi$ functions different from
\cref{eq:Strang1}, which may have less restrictions
coming from the boundary conditions. In particular, in \cref{sec:loc}
we study the local error of the approximations using the
abstract framework of strongly continuous commuting semigroups.
The analysis is carried out for the case of two operators,
but it easily extends to more operators.
We also present numerical experiments that verify the
theoretical findings.
Then, in \cref{sec:glob}, we consider two split
versions (denoted ERK2L and ERK2) of ETD2RK,
a popular second-order exponential integrator of the Runge--Kutta type.
We show that, under appropriate assumptions, in our cases of interest ERK2L can
have order reduction down to one, while ERK2 is always second-order convergent
(in the stiff sense). We demonstrate this with some numerical examples
in two and three spatial dimensions.
Finally, we draw the conclusions and present possible further developments
in \cref{sec:concl}.
\section{Splitting of
  \texorpdfstring{$\varphi$}{phi} functions}\label{sec:loc}
As mentioned in the introduction, we perform our analysis in an abstract
framework, i.e., we consider the abstract ordinary differential
equation
\begin{equation}\label{eq:AODE}
  \left\{
  \begin{aligned}
    u'(t) &= (A+B)u(t) + g(u(t)) = f(u(t)), \\
    u(0) &= u_0,
  \end{aligned}
  \right.
\end{equation}
associated to the partial differential equation \cref{eq:PDE}.
A reader not familiar with this formalism is invited to
consult for example~\cite{EN00,L95,P83}. 
In this section we make the following assumptions.
\begin{assumption}\label{assum:scs}
  Let $X$ be a Banach space with norm $\norm{\cdot}$, and
  let
$A \colon \mathcal{D}(A)\subset X \rightarrow X$ and
$B \colon \mathcal{D}(B)\subset X \rightarrow X$ be linear operators.
We assume
that $A$ and $B$ generate strongly continuous
semigroups $\{\rme^{tA}\}_{t \geq 0}$ and $\{\rme^{tB}\}_{t \geq 0}$.
\end{assumption}
\begin{assumption}\label{assum:commut}
We assume that the operators $A$ and $B$ \emph{commute} in the sense
of the semigroups, i.e., $\rme^{tA}\rme^{tB} = \rme^{tB}\rme^{tA}$.
\end{assumption}
Under \cref{assum:scs,assum:commut}
we have that 
$\rme^{tA}\rme^{tB}$ is a strongly continuous semigroup
with generator
$\overline{A+B}\colon \mathcal{D}(A+B)=\mathcal{D}(A)\cap\mathcal{D}(B)\subset X\to X$
\cite{EN00,T59}, i.e.,
\begin{equation}\label{eq:sgp}
\rme^{tA}\rme^{tB}=\rme^{t(A+B)}.
\end{equation}
Note that in our cases of interest $A+B$ is a closed operator, and therefore
we omit the closure from the notation.
Furthermore, the following bound
\begin{equation}\label{eq:expbound}
    \norm{\rme^{tA}} \leq C, \quad  0 \leq t \leq T,
\end{equation}
holds for the operator $A$
(and analogously for
the operators $B$ and $A+B$).
Here and throughout the paper $C$ denotes a generic constant
that can have different values at different occurrences, but
it is independent of the stiffness of the problem.
Thanks to the bound \cref{eq:expbound}, we get
that the $\varphi_\ell$ functions appearing in exponential
integrators, defined by
\begin{equation}\label{eq:phidef}
  \varphi_0(z)=\rme^z,\quad 
  \varphi_\ell(z)=\frac{1}{(\ell-1)!}\int_0^1\theta^{\ell-1}\rme^{(1-\theta)z}d\theta=
  \sum_{k=0}^\infty\frac{z^k}{(\ell+k)!},\quad \ell>0
\end{equation}
satisfy $\norm{\varphi_\ell(tA)} \leq C$ for $0\leq t \leq T$.
Also, from their definition it easily
follows that the recursive formula
\begin{equation}\label{eq:phirec}
\varphi_\ell(z)=
  \frac{\varphi_{\ell-1}(z)-\varphi_{\ell-1}(0)}{z},\quad \ell>0
\end{equation}
holds true.
Finally, we note that we present here the analysis
for two operators only (for simplicity of exposition).
The generalization to more than two operators is straightforward, see \cref{rem:dop}.

\subsection{Splitting of \texorpdfstring{$\varphi_1$}{phi1}}
We start the analysis with the split approximation of $\tau\varphi_1(\tau(A+B))$
mentioned in the introduction (see formula~\cref{eq:Strang1}), i.e.,
\begin{equation}\label{eq:expphi}
  \tau\varphi_1(\tau(A+B)) \approx \tau\rme^{\frac{\tau}{2}A}\varphi_1(\tau B).
\end{equation}
Note that for \emph{bounded operators} $A$ and $B$
it follows by standard
Taylor expansion that
\begin{equation*}
  \tau\varphi_1(\tau(A+B)) -\tau\rme^{\frac{\tau}{2}A}\varphi_1(\tau B)
  =\frac{\tau^3}{24}(A^2+2BA)+\mathcal{O}(\tau^4).
  \end{equation*}
\begin{lemma}\label{lem:expphi}
Under \cref{assum:scs,assum:commut} we have
\begin{equation*}
  \begin{split}
\tau\varphi_1(\tau(A+B))
  -  \tau\rme^{\frac{\tau}{2}A}\varphi_1(\tau B) =
  &-\int_0^\tau \int_0^s\left(\frac{\tau}{2}-s\right)
  \rme^{\tau A}\rme^{(\tau-\xi)B}BA d\xi  ds\\
&-\int_0^\tau \int_s^{\frac{\tau}{2}}\int_0^\sigma 
   \rme^{(\tau-s)B}\rme^{(\tau-\xi) A}A^2d\xi d\sigma ds.
  \end{split}
\end{equation*}
\end{lemma}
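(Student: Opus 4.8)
The plan is to rewrite both quantities as Bochner integrals over $[0,\tau]$ and then reduce their difference by repeated use of the fundamental theorem of calculus for the semigroups. First, by the integral representation of $\varphi_1$ in \cref{eq:phidef} and a change of variables,
\[
  \tau\varphi_1(\tau(A+B)) = \int_0^\tau \rme^{(\tau-s)(A+B)}\,ds,
  \qquad
  \tau\rme^{\frac{\tau}{2}A}\varphi_1(\tau B) = \int_0^\tau \rme^{\frac{\tau}{2}A}\rme^{(\tau-s)B}\,ds ;
\]
using the splitting \cref{eq:sgp} in the first integral, the quantity to be computed becomes $\int_0^\tau\bigl(\rme^{(\tau-s)A}-\rme^{\frac{\tau}{2}A}\bigr)\rme^{(\tau-s)B}\,ds$.

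Next I would resolve the operator difference in two steps. Differentiating $\sigma\mapsto\rme^{(\tau-\sigma)A}$ and integrating from $\sigma=s$ to $\sigma=\tau/2$ gives $\rme^{(\tau-s)A}-\rme^{\frac{\tau}{2}A}=\int_s^{\tau/2}A\rme^{(\tau-\sigma)A}\,d\sigma$, and inside this I would replace $\rme^{(\tau-\sigma)A}$ by $\rme^{\tau A}-\int_0^\sigma A\rme^{(\tau-\xi)A}\,d\xi$, again by the fundamental theorem of calculus. Substituting these two expansions and using \cref{assum:commut} to reorder the operators, the part carrying $A^2$ is already the second (triple) integral in the statement, and what remains is $\int_0^\tau\bigl(\tfrac{\tau}{2}-s\bigr)\rme^{\tau A}\rme^{(\tau-s)B}A\,ds$, since $\int_s^{\tau/2}d\sigma=\tfrac{\tau}{2}-s$.

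For this leftover term I would apply the same device to the $B$-semigroup, writing $\rme^{(\tau-s)B}=\rme^{\tau B}-\int_0^s B\rme^{(\tau-\xi)B}\,d\xi$. The contribution of $\rme^{\tau B}$ vanishes because $\int_0^\tau\bigl(\tfrac{\tau}{2}-s\bigr)ds=0$ — this is precisely the cancellation that makes the symmetric (Strang-type) splitting underlying \cref{eq:Strang1} one order more accurate than a plain Lie splitting — and the surviving piece, after one more use of commutativity to put the operators in the order $\rme^{\tau A}\rme^{(\tau-\xi)B}BA$, is exactly the first (double) integral. Collecting the two contributions gives the claimed identity.

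The algebra is just iterated integration by parts, so the only point requiring care is the functional-analytic bookkeeping: since $A$ and $B$ are unbounded, the identity is to be understood as applied to a vector in a sufficiently regular subspace (in the domains of $A^2$, of $BA$, and of the semigroup compositions occurring above), on which $t\mapsto\rme^{tA}v$ is differentiable with derivative $A\rme^{tA}v$, all integrands are continuous, and Fubini's theorem justifies the interchanges in the order of integration. This is exactly the regularity available where the lemma is used afterwards; note that the uniform bound \cref{eq:expbound} is not needed for the identity itself and enters only when it is turned into an error estimate.
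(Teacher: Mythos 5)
Your argument is correct and follows essentially the same route as the paper's own proof: the same integral representation of $\varphi_1$, the same two applications of the fundamental theorem of calculus (the paper's \cref{eq:iph} and \cref{eq:ipp}) to peel off the $A^2$ triple integral, and the same final expansion of $\rme^{(\tau-s)B}$ combined with the cancellation $\int_0^\tau(\tfrac{\tau}{2}-s)\,ds=0$ to produce the $BA$ double integral. Your closing remark on interpreting the identity on a sufficiently regular subspace is a sensible addition but does not change the substance of the argument.
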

\begin{proof}
  From the integral representation of the $\varphi_1$ function
  (see~\cref{eq:phidef}) we have
  \begin{equation}\label{eq:expphi1_1}
    \tau\varphi_1(\tau(A+B))-\tau\rme^{\frac{\tau}{2}A}\varphi_1(\tau B)=
    \int_0^\tau\left(\rme^{(\tau-s)(A+B)}-
    \rme^{\frac{\tau}{2}A}\rme^{(\tau-s)B}\right)ds.
  \end{equation}
By using the commutativity of the semigroups and inserting
\begin{equation}\label{eq:iph}
  \rme^{\frac{\tau}{2}A}=\rme^{(\tau-s)A}-\int_s^{\frac{\tau}{2}}
  \rme^{(\tau-\sigma)A}Ad\sigma
\end{equation}
into the right-hand side, we get
\begin{equation}\label{eq:expphi1_2}
    \tau\varphi_1(\tau(A+B))-\tau\rme^{\frac{\tau}{2}A}\varphi_1(\tau B)=
  \int_0^\tau \int_s^{\frac{\tau}{2}}
  \rme^{(\tau-s)B}\rme^{(\tau-\sigma)A}Ad\sigma ds.
\end{equation}
Now, by substituting
\begin{equation}\label{eq:ipp}
  \rme^{(\tau-\sigma)A}=\rme^{\tau A}-\int_0^{\sigma}
  \rme^{(\tau-\xi)A}Ad\xi,
\end{equation}
the right-hand side of \cref{eq:expphi1_2} becomes
\begin{equation*}
  \int_0^\tau \int_s^{\frac{\tau}{2}}
   \rme^{(\tau-s)B}\rme^{\tau A}Ad\sigma ds
  -\int_0^\tau \int_s^{\frac{\tau}{2}}\int_0^\sigma 
   \rme^{(\tau-s)B}\rme^{(\tau-\xi) A}A^2d\xi d\sigma ds.
\end{equation*}
The second term is in the form given in the statement.
Concerning the first term, integrating with respect to~$\sigma$ we have
\begin{equation*}
\int_0^\tau\!\! \int_s^{\frac{\tau}{2}}\!
 \rme^{(\tau-s)B}\rme^{\tau A}Ad\sigma ds=
\int_0^\tau\!\! \left(\frac{\tau}{2}-s\right)
\rme^{(\tau-s)B}\rme^{\tau A}A ds=
\int_0^\tau\!\! \left(\frac{\tau}{2}-s\right)
\rme^{\tau A}\rme^{(\tau-s)B}A ds.
\end{equation*}
Let us consider now \cref{eq:ipp} with $s$ and $B$ instead of $\sigma$
and $A$, respectively. If we insert it
in the right-hand side above,
we get
\begin{equation*}
  \int_0^\tau \left(\frac{\tau}{2}-s\right)
\rme^{\tau A}\rme^{(\tau-s)B}A ds
=-\int_0^\tau \int_0^s\left(\frac{\tau}{2}-s\right)
\rme^{\tau A}\rme^{(\tau-\xi)B}BA d\xi  ds,
\end{equation*}
where we used the fact that
\begin{equation*}
  \int_0^\tau\left(\frac{\tau}{2}-s\right)ds=0.
\end{equation*}
This concludes the proof.
\end{proof}

\begin{proposition}\label{prop:expphi}
  Let \cref{assum:scs,assum:commut}
  be valid. Then, the following
bounds on the local error
\begin{equation*}
\lambda_1=
  \norm{\tau\varphi_1(\tau(A+B))v-\tau\rme^{\frac{\tau}{2}A}\varphi_1(\tau B)v}
\end{equation*}
hold:
\begin{enumerate}[label=(\alph*)]
\item if $v\in\mathcal{D}(A^2)\cap\mathcal{D}(BA)$, then
$\lambda_1\leq C\tau^3$;
\item if $v\in\mathcal{D}(A)$, then
$\lambda_1\leq C\tau^2$;
\item if $v\not\in\mathcal{D}(A)$, then
  $\lambda_1\leq C\tau$.
\end{enumerate}
\end{proposition}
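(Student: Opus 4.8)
The plan is to read off the three estimates from identities that already appeared inside the proof of \cref{lem:expphi}, using at each regularity level the representation that requires no more smoothness of $v$ than is available, and then bounding the remaining scalar integrals by means of the uniform bound \cref{eq:expbound}.

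For part~(a) I would apply the identity in the statement of \cref{lem:expphi} directly to $v$. Since $v\in\mathcal{D}(BA)\cap\mathcal{D}(A^2)$, the vectors $BAv$ and $A^2v$ are well-defined elements of $X$, so both iterated integrals are meaningful. Moving the norm inside and using $\norm{\rme^{tA}}\le C$ and $\norm{\rme^{tB}}\le C$ for $0\le t\le T$, the first term is bounded by $C\norm{BAv}\int_0^\tau\int_0^s\lvert\tfrac{\tau}{2}-s\rvert\,d\xi\,ds=C\norm{BAv}\int_0^\tau s\,\lvert\tfrac{\tau}{2}-s\rvert\,ds$, which is $\mathcal{O}(\tau^3)$, and the second term is bounded by $C\norm{A^2v}\int_0^\tau\bigl\lvert\int_s^{\tau/2}\sigma\,d\sigma\bigr\rvert\,ds$, again $\mathcal{O}(\tau^3)$. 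Adding the two contributions gives $\lambda_1\le C\tau^3$.

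For part~(b) one derivative of $v$ is not enough to give meaning to $BAv$ or $A^2v$, so I would step back to the intermediate identity \cref{eq:expphi1_2}, which holds as soon as $v\in\mathcal{D}(A)$ and reads
\[
  \tau\varphi_1(\tau(A+B))v-\tau\rme^{\frac{\tau}{2}A}\varphi_1(\tau B)v
   =\int_0^\tau\int_s^{\frac{\tau}{2}}\rme^{(\tau-s)B}\rme^{(\tau-\sigma)A}Av\,d\sigma\,ds .
\]
Taking norms and using \cref{eq:expbound} yields $\lambda_1\le C\norm{Av}\int_0^\tau\lvert\tfrac{\tau}{2}-s\rvert\,ds=\tfrac14\,C\norm{Av}\,\tau^2$, hence $\lambda_1\le C\tau^2$. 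For part~(c), no regularity of $v$ is available, so I would return to the very first representation \cref{eq:expphi1_1}, namely
\[
  \tau\varphi_1(\tau(A+B))v-\tau\rme^{\frac{\tau}{2}A}\varphi_1(\tau B)v
   =\int_0^\tau\Bigl(\rme^{(\tau-s)(A+B)}-\rme^{\frac{\tau}{2}A}\rme^{(\tau-s)B}\Bigr)v\,ds ,
\]
and apply the triangle inequality together with the uniform bounds for the semigroups generated by $A$, $B$ and $A+B$: the integrand is bounded by $C\norm{v}$, so $\lambda_1\le C\tau\norm{v}$.

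I do not expect any real obstacle: once \cref{lem:expphi} is available the proposition is essentially bookkeeping. The only point deserving attention is the matching of each regularity class to the correct stage of the telescoping argument of \cref{lem:expphi} — one must resist using the fully expanded identity when $v$ lacks the domain regularity needed to make $A^2v$ or $BAv$ meaningful, and instead fall back to \cref{eq:expphi1_2} or \cref{eq:expphi1_1}, respectively. After this choice is made, only elementary polynomial integrals in the time variables remain to be evaluated.
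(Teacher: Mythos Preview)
Your proposal is correct and follows essentially the same approach as the paper: for each regularity level you pick exactly the same intermediate identity from the proof of \cref{lem:expphi}---the full statement for (a), formula~\cref{eq:expphi1_2} for (b), and formula~\cref{eq:expphi1_1} for (c)---and then apply the semigroup bounds~\cref{eq:expbound}. The only difference is that you spell out the elementary scalar integrals explicitly, whereas the paper leaves them implicit.
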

\begin{proof}
  The first statement follows directly
  from \cref{lem:expphi} and the bounds on the
  semigroups generated by
  $A$ and $B$, see~\cref{eq:expbound}.
  For the second statement, we note that the argument below
  \cref{eq:expphi1_2}
  cannot be made, and therefore we
  obtain the result by bounding \cref{eq:expphi1_2} using the
  estimate~\cref{eq:expbound}
  on the semigroups.
  For the third statement, similarly to the previous case, we stop
  at \cref{eq:expphi1_1} and we obtain the desired result.
\end{proof}
Note that, in our cases of interest, the order reduction occurs because the
boundary conditions are not satisfied, and \emph{not} because of the lack of
spatial regularity
of the functions. This means, for example, that
a sufficiently smooth function $v$ satisfies statement (a) in~\cref{prop:expphi} if
$v$ satisfies the boundary conditions of $A$ and $B$, and in addition $Av$
satisfies the boundary conditions of $A$
(see \cref{sec:locerr} for some numerical examples).

We observe that the approximation \cref{eq:expphi} is not symmetric with
respect to the operators $A$ and $B$. In fact, if we interchange
them, we obtain a different approximation with results
(a)--(c) in \cref{prop:expphi} modified accordingly.
A symmetric approximation is
\begin{equation}\label{eq:expexp}
  \tau\varphi_1(\tau(A+B))
  \approx \tau\rme^{\frac{\tau}{2}A}\rme^{\frac{\tau}{2} B}
  =\tau\rme^{\frac{\tau}{2}(A+B)}.
\end{equation}
Again, in the case of bounded operators $A$ and $B$, we have by Taylor
expansion that
\begin{equation*}
  \tau\varphi_1(\tau(A+B)) - \tau\rme^{\frac{\tau}{2}(A+B)}
  =\frac{\tau^3}{24}(A+B)^2+\mathcal{O}(\tau^4).
\end{equation*}
\begin{lemma}\label{lem:expexp}
  Under \cref{assum:scs,assum:commut} we have
\begin{multline*}
    \tau\varphi_1(\tau(A+B)) - \tau\rme^{\frac{\tau}{2}(A+B)} =
    \int_0^\tau\int_0^s\int_{\frac{\tau}{2}}^\sigma \rme^{(\tau-\xi)(A+B)}(A+B)^2d\xi d\sigma ds\\
    +\frac{\tau^3}{4}\rme^{\frac{\tau}{2}(A+B)}\varphi_2\left(\tfrac{\tau}{2}(A+B)\right)(A+B)^2.
\end{multline*}
\end{lemma}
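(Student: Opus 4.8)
The plan is to follow the strategy of the proof of \cref{lem:expphi}, but now treating $A+B$ as a single generator; the only genuine choice is where to center the two successive expansions of the semigroup, so that the leftover ``elementary'' terms combine into the closed-form $\varphi_2$ expression in the statement.

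Concretely, I would start from the integral representation of $\varphi_1$ in \cref{eq:phidef} to write
\[
  \tau\varphi_1(\tau(A+B))-\tau\rme^{\frac{\tau}{2}(A+B)}
  =\int_0^\tau\bigl(\rme^{(\tau-s)(A+B)}-\rme^{\frac{\tau}{2}(A+B)}\bigr)ds .
\]
Next I would insert $\rme^{(\tau-s)(A+B)}=\rme^{\tau(A+B)}-\int_0^s\rme^{(\tau-\sigma)(A+B)}(A+B)d\sigma$ (the analogue of \cref{eq:ipp} for the single operator $A+B$, i.e.\ an expansion around the left endpoint), which yields the term $\tau\rme^{\tau(A+B)}$ together with a double integral of $\rme^{(\tau-\sigma)(A+B)}(A+B)$. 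Into this double integral I would then substitute $\rme^{(\tau-\sigma)(A+B)}=\rme^{\frac{\tau}{2}(A+B)}-\int_{\tau/2}^{\sigma}\rme^{(\tau-\xi)(A+B)}(A+B)d\xi$, this time expanding around the \emph{midpoint} $\tau/2$ (cf.\ \cref{eq:iph}). The contribution carrying $\rme^{\frac{\tau}{2}(A+B)}$ is independent of $\sigma$ and $\xi$ and integrates to $\tfrac{\tau^2}{2}(A+B)\rme^{\frac{\tau}{2}(A+B)}$, while the rest is precisely the triple integral in the statement. Collecting everything gives
\[
  \tau\varphi_1(\tau(A+B))-\tau\rme^{\frac{\tau}{2}(A+B)}
  =\int_0^\tau\!\int_0^s\!\int_{\tau/2}^{\sigma}\rme^{(\tau-\xi)(A+B)}(A+B)^2 d\xi d\sigma ds+R ,
\]
with $R=\tau\rme^{\tau(A+B)}-\tau\rme^{\frac{\tau}{2}(A+B)}-\tfrac{\tau^2}{2}(A+B)\rme^{\frac{\tau}{2}(A+B)}$.

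It then remains to identify $R$ with $\tfrac{\tau^3}{4}\rme^{\frac{\tau}{2}(A+B)}\varphi_2\bigl(\tfrac{\tau}{2}(A+B)\bigr)(A+B)^2$. For this I would iterate the recursion \cref{eq:phirec} to obtain $\rme^{z}=1+z+z^{2}\varphi_2(z)$, whence $\rme^{z}\varphi_2(z)z^{2}=\rme^{2z}-\rme^{z}-z\rme^{z}$; taking $z=\tfrac{\tau}{2}(A+B)$ and multiplying by $\tau$ reproduces $R$ exactly, since $\tfrac{\tau^{3}}{4}(A+B)^{2}=\tau z^{2}$. The rest is routine bookkeeping of iterated integrals; the only point that really needs attention is the asymmetry of the two expansions (the outer semigroup centered at the left endpoint, the inner one at $\tau/2$): centering both at the same point produces an equivalent identity in which the $\varphi_2$ term is not isolated in closed form, which is less convenient for the error estimates to follow. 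As in \cref{lem:expphi}, the semigroup identities above are understood on the appropriate domains, here $\mathcal{D}((A+B)^{2})$.
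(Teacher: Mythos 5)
Your proposal is correct and follows essentially the same route as the paper: the same integral representation, the same two semigroup insertions (first around the left endpoint $\tau(A+B)$, then around the midpoint $\tfrac{\tau}{2}(A+B)$), with the only cosmetic difference being that you collect the leftover terms into $R$ and identify them at the end via $\rme^z=1+z+z^2\varphi_2(z)$, whereas the paper applies the recursion \cref{eq:phirec} in two stages (first producing a $\varphi_1$ term, then converting it to the $\varphi_2$ term). The computations agree term by term.
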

\begin{proof}
Similarly to the proof of \cref{lem:expphi}, from the integral representation
of the $\varphi_1$ function \cref{eq:phidef} we have
\begin{equation}\label{eq:expexp_1}
  \tau\varphi_1(\tau(A+B)) - \tau\rme^{\frac{\tau}{2}(A+B)}
  = \int_0^\tau\left(\rme^{(\tau-s)(A+B)}-
  \rme^{\frac{\tau}{2}(A+B)}\right)ds.
\end{equation}
Then, by exploiting \cref{eq:ipp} with $s$ and $A+B$ instead of $\sigma$
and $A$, respectively, the right-hand side becomes
\begin{equation*}
  \int_0^\tau\left(\rme^{\tau(A+B)}-
  \int_0^s\rme^{(\tau-\sigma)(A+B)}(A+B)d\sigma
  -\rme^{\frac{\tau}{2}(A+B)}\right)ds.
\end{equation*}
We equivalently write this expression as
\begin{equation}\label{eq:expexp_2}
  -\int_0^\tau\int_0^s\rme^{(\tau-\sigma)(A+B)}(A+B)d\sigma ds
  +\frac{\tau^2}{2}\rme^{\frac{\tau}{2}(A+B)}\varphi_1\left(\tfrac{\tau}{2}(A+B)\right)(A+B),
\end{equation}
where we employed the recursive formula for the $\varphi_1$ function~\cref{eq:phirec}.
Inserting  \cref{eq:iph}
with $\sigma$ and $A+B$ instead of $s$ and $A$, respectively,
into the integral term
we get
\begin{equation*}
    \int_0^\tau\int_0^s\int_{\frac{\tau}{2}}^\sigma \rme^{(\tau-\xi)(A+B)}(A+B)^2d\xi d\sigma ds
    +\frac{\tau^3}{4}\rme^{\frac{\tau}{2}(A+B)}\varphi_2\left(\tfrac{\tau}{2}(A+B)\right)(A+B)^2,
\end{equation*}
where we exploited the recursive formula for the $\varphi_2$
function (see again~\cref{eq:phirec}) . This concludes the proof.
\end{proof}
\begin{proposition}\label{prop:expexp}
  Let \cref{assum:scs,assum:commut}
  be valid. Then, the following
bounds on the local error
\begin{equation*}
\lambda_2=
  \norm{\tau\varphi_1(\tau(A+B))v-\tau\rme^{\frac{\tau}{2}(A+B)}v}
\end{equation*}
hold:
\begin{enumerate}[label=(\alph*)]
\item if $v\in\mathcal{D}((A+B)^2)$, then
$\lambda_2\leq C\tau^3$;
\item if $v\in\mathcal{D}(A+B)$, then
$\lambda_2\leq C\tau^2$;
\item if $v\not\in\mathcal{D}(A+B)$, then
  $\lambda_2\leq C\tau$.
\end{enumerate}
\end{proposition}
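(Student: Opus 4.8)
The plan is to argue exactly as in the proof of \cref{prop:expphi}: apply the representation of \cref{lem:expexp}, or one of the intermediate expressions appearing in its proof, and truncate it at the level dictated by the regularity assumed on $v$.

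For statement~(a), apply the identity of \cref{lem:expexp} to $v\in\mathcal{D}((A+B)^2)$, so that $(A+B)^2v\in X$. The first term is then the integral of $\rme^{(\tau-\xi)(A+B)}(A+B)^2v$ over a region of measure $\mathcal{O}(\tau^3)$, hence bounded by $C\tau^3\norm{(A+B)^2v}$ by~\cref{eq:expbound}; the second term carries the explicit prefactor $\tau^3/4$, while $\rme^{\frac{\tau}{2}(A+B)}$ and $\varphi_2(\tfrac{\tau}{2}(A+B))$ are uniformly bounded by $C$ on $[0,T]$, so it is likewise $\mathcal{O}(\tau^3)$. This yields $\lambda_2\leq C\tau^3$.

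For statement~(b), the final identity of \cref{lem:expexp} can no longer be used, since it would require $(A+B)^2v$; instead stop at the intermediate expression~\cref{eq:expexp_2}. This expression is obtained from~\cref{eq:expexp_1} by a single application of~\cref{eq:ipp} (with $s$ and $A+B$ in place of $\sigma$ and $A$) together with the recursion~\cref{eq:phirec}, so it is legitimate as soon as $v\in\mathcal{D}(A+B)$. The double integral of $\rme^{(\tau-\sigma)(A+B)}(A+B)v$ over a region of measure $\mathcal{O}(\tau^2)$ is bounded by $C\tau^2\norm{(A+B)v}$, and the term $\tfrac{\tau^2}{2}\rme^{\frac{\tau}{2}(A+B)}\varphi_1(\tfrac{\tau}{2}(A+B))(A+B)v$ is bounded by $C\tau^2\norm{(A+B)v}$ directly from its explicit factor $\tau^2/2$ and the uniform bounds on $\rme^{\frac{\tau}{2}(A+B)}$ and $\varphi_1(\tfrac{\tau}{2}(A+B))$; hence $\lambda_2\leq C\tau^2$. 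For statement~(c), do not perform any substitution and estimate~\cref{eq:expexp_1} applied to $v$ directly: by~\cref{eq:expbound} the integrand $\rme^{(\tau-s)(A+B)}v-\rme^{\frac{\tau}{2}(A+B)}v$ has norm at most $2C\norm{v}$, and integrating over $s\in[0,\tau]$ gives $\lambda_2\leq C\tau$.

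I do not expect a genuine obstacle. The only point that needs care — precisely as in \cref{prop:expphi} — is the bookkeeping of domains: one must invoke the representation (\cref{eq:expexp_1}, \cref{eq:expexp_2}, or the full identity of \cref{lem:expexp}) that remains valid under the hypothesis in force, because each use of an identity of the type $\rme^{t(A+B)}w = w + \int_0^t\rme^{\sigma(A+B)}(A+B)w\,d\sigma$ requires one further power of the generator to act on the vector to which everything is ultimately applied.
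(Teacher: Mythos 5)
Your proposal is correct and follows exactly the same route as the paper's proof: statement (a) from the full identity of \cref{lem:expexp}, statement (b) from the intermediate expression \cref{eq:expexp_2}, and statement (c) from \cref{eq:expexp_1}, each estimated via the uniform bounds \cref{eq:expbound} on the semigroup and on the $\varphi_\ell$ functions. The paper states this in one sentence; your version simply spells out the bookkeeping of domains and the measure of the integration regions.
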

\begin{proof}
Using arguments similar to the proof of \cref{prop:expphi},
exploiting the bounds on the semigroup and on the $\varphi_2$ function,
the first statement follows directly from \cref{lem:expexp}, while the
second and the
third statement from \cref{eq:expexp_2} and~\cref{eq:expexp_1},
respectively.
\end{proof}

We finally consider the symmetric
approximation 
\begin{equation}\label{eq:phiphi}
  \tau\varphi_1(\tau(A+B)) \approx \tau\varphi_1(\tau A)\varphi_1(\tau B).
\end{equation}
This is actually the third-order approximation already
employed in~\cite{CC24bis}.
By Taylor expansion, in case of commuting bounded operators, we have
\begin{equation*}
  \tau\varphi_1(\tau(A+B)) - \tau\varphi_1(\tau A)\varphi_1(\tau B)
  =\frac{\tau^3}{12}BA+\mathcal{O}(\tau^4).
  \end{equation*}
We can then prove the following.
\begin{lemma}\label{lem:phi1phi1}
Under \cref{assum:scs,assum:commut} we have
\begin{equation*}
  \tau\varphi_1(\tau (A+B)) -
  \tau\varphi_1(\tau A)\varphi_1(\tau B) =
  \frac{1}{\tau}\int_0^\tau\int_0^\tau \int_s^\sigma
  (\tau-s)\rme^{(\tau-\xi)A}\varphi_1((\tau-s)B)
  BAd\xi d\sigma ds.
\end{equation*}
\end{lemma}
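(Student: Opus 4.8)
The plan is to follow the same strategy as in the proofs of \cref{lem:expphi,lem:expexp}: represent the $\varphi_1$ functions through their integral formula \cref{eq:phidef} and then successively insert fundamental-theorem-of-calculus identities for the semigroups. The new ingredient required here is a pair of ``vanishing average'' cancellations that prevent the expansion from producing a higher-order iterated integral and instead collapse everything to the single triple integral in the statement.

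First I would write $\tau\varphi_1(\tau(A+B))=\int_0^\tau\rme^{(\tau-s)A}\rme^{(\tau-s)B}\,ds$ using \cref{eq:phidef,eq:sgp}, and likewise $\tau\varphi_1(\tau A)\varphi_1(\tau B)=\int_0^\tau\rme^{(\tau-s)A}\varphi_1(\tau B)\,ds$, so that
\begin{equation*}
  \tau\varphi_1(\tau(A+B))-\tau\varphi_1(\tau A)\varphi_1(\tau B)=\int_0^\tau\rme^{(\tau-s)A}\bigl(\rme^{(\tau-s)B}-\varphi_1(\tau B)\bigr)\,ds.
\end{equation*}
Since $\int_0^\tau\rme^{(\tau-s)B}\,ds=\tau\varphi_1(\tau B)$, the contribution $\int_0^\tau\varphi_1(\tau A)\bigl(\rme^{(\tau-s)B}-\varphi_1(\tau B)\bigr)\,ds$ vanishes; subtracting this vanishing term lets me replace $\rme^{(\tau-s)A}$ by $\rme^{(\tau-s)A}-\varphi_1(\tau A)$ in the integrand. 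Symmetrically, using $\int_0^\tau\bigl(\rme^{(\tau-s)A}-\varphi_1(\tau A)\bigr)\,ds=0$, I can split $\rme^{(\tau-s)B}-\varphi_1(\tau B)=\bigl(\rme^{(\tau-s)B}-I\bigr)+\bigl(I-\varphi_1(\tau B)\bigr)$ and discard the $s$-independent piece $I-\varphi_1(\tau B)$. Recognizing $\rme^{(\tau-s)B}-I=(\tau-s)\varphi_1((\tau-s)B)B$ via the recursion \cref{eq:phirec}, this reduces the difference to $\int_0^\tau(\tau-s)\bigl(\rme^{(\tau-s)A}-\varphi_1(\tau A)\bigr)\varphi_1((\tau-s)B)B\,ds$.

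It then remains to expand the leftover factor $\rme^{(\tau-s)A}-\varphi_1(\tau A)$. Writing $\varphi_1(\tau A)=\tfrac1\tau\int_0^\tau\rme^{(\tau-\sigma)A}\,d\sigma$ and $\rme^{(\tau-s)A}-\rme^{(\tau-\sigma)A}=\int_s^\sigma\rme^{(\tau-\xi)A}A\,d\xi$ gives $\rme^{(\tau-s)A}-\varphi_1(\tau A)=\tfrac1\tau\int_0^\tau\int_s^\sigma\rme^{(\tau-\xi)A}A\,d\xi\,d\sigma$; inserting this and commuting $A$ past $\varphi_1((\tau-s)B)B$ (so that $A\varphi_1((\tau-s)B)B=\varphi_1((\tau-s)B)BA$) yields exactly the claimed triple-integral identity.

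The step that requires the most care — and the main obstacle if one proceeds naively — is this first reduction: simply Duhamel-expanding $\rme^{(\tau-s)A}$ once, as in \cref{lem:expphi}, produces a quadruple iterated integral whose order structure does not match the statement. The two ``vanishing average'' identities resolve this, since the $s$-average over $[0,\tau]$ of $\rme^{(\tau-s)C}$ is precisely $\varphi_1(\tau C)$, which is exactly the constant that cancels the spurious lower-order contributions and leaves a single genuinely third-order term. As in the preceding lemmas, the formal manipulations involving the unbounded operators $A$, $B$ and $BA$ are understood on the appropriate domains.
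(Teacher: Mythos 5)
Your proof is correct and uses the same toolkit as the paper -- the integral representation \cref{eq:phidef}, the identity $\rme^{(\tau-s)A}-\rme^{(\tau-\sigma)A}=\int_s^\sigma\rme^{(\tau-\xi)A}A\,d\xi$, the vanishing of the antisymmetric $(s,\sigma)$-average, and the recursion \cref{eq:phirec} -- but you apply these steps in a different order, which gives a recognizably different intermediate decomposition. The paper first expands $\varphi_1(\tau A)$ via Duhamel and cancels $\rme^{(\tau-s)B}\rme^{(\tau-s)A}$ against $\rme^{(\tau-s)(A+B)}$ to reach \cref{eq:phi1phi1_2}, and only afterwards centers the $B$-factor ($\rme^{(\tau-s)B}\to\rme^{(\tau-s)B}-I$) and applies the recursion; you instead center both factors at the outset, writing the error as $\int_0^\tau\bigl(\rme^{(\tau-s)A}-\varphi_1(\tau A)\bigr)\bigl(\rme^{(\tau-s)B}-I\bigr)ds$, which makes the third-order structure and the two cancellations transparent before any iterated integral appears. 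The one point to watch is that your ordering leaves the generator $A$ to the \emph{left} of $\varphi_1((\tau-s)B)B$, so you must commute $A$ past a function of $B$ (equivalently, pass through $\mathcal{D}(AB)$ and identify $ABv$ with $BAv$); the paper's ordering keeps $A$ rightmost throughout and only ever commutes the two semigroups, which is exactly what \cref{assum:commut} provides and lands directly on the domain $\mathcal{D}(BA)$ used in \cref{prop:phi1phi1}. This is a harmless difference under the standing (somewhat formal) treatment of domains, but it is worth stating explicitly if you want the argument to match the hypotheses of \cref{prop:phi1phi1} verbatim.
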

\begin{proof}
From the integral definition of the $\varphi_1$ function \cref{eq:phidef},
and using the commutativity of the semigroups, we have
\begin{multline}\label{eq:phi1phi1_1}
  \tau\varphi_1(\tau (A+B)) - \tau\varphi_1(\tau A)\varphi_1(\tau B) =
  \int_0^\tau \rme^{(\tau-s)(A+B)}ds \\
  -\frac{1}{\tau}\int_0^\tau \rme^{(\tau-s)B}
  \left(\int_0^\tau \rme^{(\tau-\sigma)A}d\sigma\right)ds.
\end{multline}
Now, by using the fact that
\begin{equation*}
  \rme^{(\tau-\sigma)A}=\rme^{(\tau-s)A}-
  \int_s^\sigma\rme^{(\tau-\xi)A}Ad\xi,
\end{equation*}
the right-hand side becomes
\begin{equation*}
  \int_0^\tau \rme^{(\tau-s)(A+B)}ds
  -\int_0^\tau \rme^{(\tau-s)B}
  \left(
  \rme^{(\tau-s)A}-
  \frac{1}{\tau}\int_0^\tau\int_s^\sigma\rme^{(\tau-\xi)A}Ad\xi d\sigma\right)ds.
\end{equation*}
Exploiting \cref{eq:sgp}, we then get
\begin{equation}\label{eq:phi1phi1_2}
 \tau\varphi_1(\tau(A+B))-\tau\varphi_1(\tau A)\varphi_1(\tau B) =
  \frac{1}{\tau}\int_0^\tau\int_0^\tau \int_s^\sigma\rme^{(\tau-s)B}\rme^{(\tau-\xi)A}Ad\xi d\sigma ds.
\end{equation}
Finally, by using the commutativity of the semigroups, the right-hand side can
be equivalently written as
\begin{multline*}
\frac{1}{\tau}\int_0^\tau\int_0^\tau \int_s^\sigma
\rme^{(\tau-\xi)A}
\rme^{(\tau-s)B}Ad\xi d\sigma ds=
\frac{1}{\tau}\int_0^\tau\int_0^\tau \int_s^\sigma
\rme^{(\tau-\xi)A}(\rme^{(\tau-s)B}-I)Ad\xi d\sigma ds\\
=\frac{1}{\tau}\int_0^\tau\int_0^\tau \int_s^\sigma
(\tau-s)
\rme^{(\tau-\xi)A}\varphi_1((\tau-s)B)BAd\xi d\sigma ds.
\end{multline*}
Here, we also exploited that
\begin{equation*}
  \int_0^\tau\int_0^\tau \int_s^\sigma \rme^{(\tau-\xi)A}Ad\xi d\sigma ds=
  \int_0^\tau\int_0^\tau \left(\rme^{(\tau-s)A}-\rme^{(\tau-\sigma)A}\right)
  d\sigma ds=
  0
\end{equation*}
and the recursive definition of the $\varphi_1$ function~\cref{eq:phirec}.
This concludes the proof.
\end{proof}
\begin{proposition}\label{prop:phi1phi1}
  Let \cref{assum:scs,assum:commut}
  be valid. Then, the following
bounds on the local error
\begin{equation*}
\lambda_3=
  \norm{\tau\varphi_1(\tau(A+B))v-\tau\varphi_1(\tau A)\varphi_1(\tau B)v}
\end{equation*}
hold:
\begin{enumerate}[label=(\alph*)]
\item if $v\in\mathcal{D}(BA)$,
  then $\lambda_3 \leq C\tau^3$;
\item if $v\in\mathcal{D}(A)$,
  then $\lambda_3 \leq C\tau^2$;
\item if $v\not\in\mathcal{D}(A)$,
  then $\lambda_3 \leq C\tau$.
\end{enumerate}
\end{proposition}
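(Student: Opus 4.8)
The plan is to reuse the template of the proofs of \cref{prop:expphi} and \cref{prop:expexp}: the three regimes simply correspond to how far one carries the telescoping performed in the proof of \cref{lem:phi1phi1} before stopping and invoking the uniform bounds~\cref{eq:expbound} on the semigroups together with $\norm{\varphi_1(tB)}\le C$ for $0\le t\le T$.

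For statement~(a), when $v\in\mathcal{D}(BA)$ the operator $BA$ already sits to the right of purely bounded factors in the integral representation of \cref{lem:phi1phi1}, so I would apply both sides to $v$, move the norm inside, use $\norm{\rme^{(\tau-\xi)A}}\le C$ and $\norm{\varphi_1((\tau-s)B)}\le C$, and count powers of $\tau$: the prefactor $\tau^{-1}$, the two outer integrations over $[0,\tau]$, the inner integration over an interval of length at most $\tau$, and the scalar weight $\tau-s\le\tau$ give $\tau^{-1}\cdot\tau^{4}=\tau^{3}$, hence $\lambda_3\le C\tau^{3}\norm{BAv}$.

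For statement~(b), only one derivative of $v$ is available, so I would stop at the intermediate identity~\cref{eq:phi1phi1_2}, where a single factor $A$ stands next to $v$; this truncation is legitimate precisely because $v\in\mathcal{D}(A)$ (and then $\rme^{(\tau-\xi)A}Av=A\rme^{(\tau-\xi)A}v$ since $A$ is closed and commutes with its own semigroup). Bounding that triple integral as before, but now without the extra weight $\tau-s$, yields $\tau^{-1}\cdot\tau^{3}=\tau^{2}$, i.e.\ $\lambda_3\le C\tau^{2}\norm{Av}$. For statement~(c), no derivative of $v$ is available at all, so I would stop at the very first identity~\cref{eq:phi1phi1_1}: each of its two terms is an integral over $[0,\tau]$ of a uniformly bounded integrand (the second term carrying the compensating factor $\tau^{-1}$ against an additional integration over $[0,\tau]$), which immediately gives $\lambda_3\le C\tau\norm{v}$.

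I do not expect a genuine obstacle here: the substantive computation was already carried out in \cref{lem:phi1phi1}, and the only point requiring mild care is the domain bookkeeping — making sure that at each truncation level the unbounded operators act on vectors lying in the appropriate domain, so that the intermediate identities and the rearrangements (use of \cref{eq:sgp}, of commutativity, and of the recursion~\cref{eq:phirec}) remain valid in the strong sense. Once that is noted, the three bounds follow from the power counting above.
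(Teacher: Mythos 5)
Your proposal is correct and follows essentially the same route as the paper: statement (a) from the integral representation in \cref{lem:phi1phi1}, statement (b) by stopping at the intermediate identity~\cref{eq:phi1phi1_2}, and statement (c) by stopping at~\cref{eq:phi1phi1_1}, in each case invoking the uniform bounds on the semigroups and on $\varphi_1$. The power counting and the domain bookkeeping you describe are exactly what the paper's (much terser) proof relies on.
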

\begin{proof}
The first statement follows directly from \cref{lem:phi1phi1}, using the 
bounds on the semigroup and on the $\varphi_1$ function.
As in the previous propositions, the second and the third
statement follow from intermediate results,
in this case \cref{eq:phi1phi1_2} and \cref{eq:phi1phi1_1},
respectively.
\end{proof}
Note that in \cref{eq:phiphi} we can interchange arbitrarily the
operators $A$ and $B$, since it is a symmetric approximation. Therefore,
statements similar to (a)--(c) in \cref{prop:phi1phi1} in terms of
$\mathcal{D}(AB)$ and $\mathcal{D}(B)$ hold true.
\subsection{Splitting of \texorpdfstring{$\varphi_\ell$}{phiell}}
We consider here
 the general symmetric
approximation
\begin{equation}\label{eq:phiellphiell}
  \tau\varphi_\ell(\tau(A+B)) \approx
  \ell!\tau\varphi_\ell(\tau A)\varphi_\ell(\tau B),
\end{equation}
which was already employed in~\cite{CC24bis}.
\begin{lemma}\label{lem:phiellphiell}
Under \cref{assum:scs,assum:commut} we have
\begin{multline*}
  \tau\varphi_\ell(\tau (A+B)) -
  \ell!\tau\varphi_\ell(\tau A)\varphi_\ell(\tau B) = \\
  \frac{\ell}{\tau^{2\ell-1}(\ell-1)!}\int_0^\tau\int_0^\tau\int_s^\sigma
  (\sigma s)^{(\ell-1)}(\tau-s)
  \rme^{(\tau-\xi)A}\varphi_1((\tau-s)B)BA d\xi d\sigma ds.
\end{multline*}
\end{lemma}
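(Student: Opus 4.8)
The plan is to follow closely the proof of \cref{lem:phi1phi1}, the only new ingredient being the weight $\theta^{\ell-1}$ in the integral representation \cref{eq:phidef} of $\varphi_\ell$. First I would bring both sides into comparable integral form. Substituting $s=\tau\theta$ in $\tau\varphi_\ell(\tau(A+B))$ gives
\begin{equation*}
  \tau\varphi_\ell(\tau(A+B))=\frac{1}{\tau^{\ell-1}(\ell-1)!}\int_0^\tau s^{\ell-1}\rme^{(\tau-s)(A+B)}\,ds,
\end{equation*}
and substituting $\sigma=\tau\theta$, $s=\tau\eta$ in the two factors of $\ell!\,\tau\varphi_\ell(\tau A)\varphi_\ell(\tau B)$, using $\ell!/((\ell-1)!)^2=\ell/(\ell-1)!$,
\begin{equation*}
  \ell!\,\tau\varphi_\ell(\tau A)\varphi_\ell(\tau B)=\frac{\ell}{\tau^{2\ell-1}(\ell-1)!}\int_0^\tau\!\int_0^\tau(\sigma s)^{\ell-1}\rme^{(\tau-\sigma)A}\rme^{(\tau-s)B}\,d\sigma\,ds.
\end{equation*}
To match the scaling, I would multiply the first identity by the harmless factor $\tfrac{\ell}{\tau^\ell}\int_0^\tau\sigma^{\ell-1}\,d\sigma=1$, turning it into a double integral with the same weight $(\sigma s)^{\ell-1}$, and then split $\rme^{(\tau-s)(A+B)}=\rme^{(\tau-s)B}\rme^{(\tau-s)A}$ via \cref{assum:commut}.

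Subtracting the two and pulling the factor $\rme^{(\tau-s)B}$ out (again by commutativity), the difference becomes
\begin{equation*}
  \frac{\ell}{\tau^{2\ell-1}(\ell-1)!}\int_0^\tau\!\int_0^\tau(\sigma s)^{\ell-1}\rme^{(\tau-s)B}\bigl(\rme^{(\tau-s)A}-\rme^{(\tau-\sigma)A}\bigr)\,d\sigma\,ds.
\end{equation*}
Using $\rme^{(\tau-s)A}-\rme^{(\tau-\sigma)A}=\int_s^\sigma\rme^{(\tau-\xi)A}A\,d\xi$ (the same identity already exploited in the proof of \cref{lem:phi1phi1}) and commutativity once more, this equals the $\varphi_\ell$-analogue of \cref{eq:phi1phi1_2}, namely $\frac{\ell}{\tau^{2\ell-1}(\ell-1)!}\int_0^\tau\int_0^\tau\int_s^\sigma(\sigma s)^{\ell-1}\rme^{(\tau-\xi)A}\rme^{(\tau-s)B}A\,d\xi\,d\sigma\,ds$.

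To finish, I would write $\rme^{(\tau-s)B}=I+(\tau-s)\varphi_1((\tau-s)B)B$; the contribution of the identity part, after integrating out $\xi$, equals $\int_0^\tau\int_0^\tau(\sigma s)^{\ell-1}\bigl(\rme^{(\tau-s)A}-\rme^{(\tau-\sigma)A}\bigr)\,d\sigma\,ds$, which vanishes because the weight $(\sigma s)^{\ell-1}$ is symmetric under the exchange $s\leftrightarrow\sigma$, and the remaining part is precisely the claimed formula. I do not expect any serious obstacle here: the only points needing attention are the bookkeeping of the powers of $\tau$ and of the factorials after the two substitutions, and — as in the earlier lemmas — the tacit convention that the identity is to be understood as applied to vectors $v$ lying in the appropriate domain so that $A$, $BA$ act meaningfully, all other manipulations being legitimate under \cref{assum:scs,assum:commut} and the bound \cref{eq:expbound}.
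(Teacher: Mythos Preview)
Your proposal is correct and follows essentially the same route as the paper: it writes out the same integral representations for $\tau\varphi_\ell(\tau(A+B))$ and $\ell!\,\tau\varphi_\ell(\tau A)\varphi_\ell(\tau B)$, highlights the same vanishing-by-symmetry identity $\int_0^\tau\!\int_0^\tau(\sigma s)^{\ell-1}\bigl(\rme^{(\tau-s)A}-\rme^{(\tau-\sigma)A}\bigr)\,d\sigma\,ds=0$, and then refers back to the steps of \cref{lem:phi1phi1}. Your write-up is simply more explicit about the intermediate manipulations (the harmless factor $\tfrac{\ell}{\tau^\ell}\int_0^\tau\sigma^{\ell-1}\,d\sigma=1$ and the splitting $\rme^{(\tau-s)B}=I+(\tau-s)\varphi_1((\tau-s)B)B$), which the paper leaves implicit.
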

\begin{proof}
From the integral definition of the $\varphi_\ell$ function \cref{eq:phidef},
using the commutativity of the semigroups, we have
\begin{multline*}
    \tau\varphi_\ell(\tau (A+B)) -
    \ell!\tau\varphi_\ell(\tau A)\varphi_\ell(\tau B) =
  \frac{1}{\tau^{\ell-1}(\ell-1)!}\int_0^\tau s^{\ell-1}\rme^{(\tau-s)(A+B)}ds \\
  -\frac{\ell}{\tau^{2\ell-1}(\ell-1)!}\int_0^\tau
  s^{\ell-1}\rme^{(\tau-s)B}
  \left(\int_0^\tau \sigma^{\ell-1}\rme^{(\tau-\sigma)A} d\sigma\right)ds.
\end{multline*}
Taking into account that
\begin{equation*}
  \int_0^\tau\int_0^\tau(\sigma s)^{\ell-1}\left(\rme^{(\tau-s)A}-\rme^{(\tau-\sigma)A}\right) d\sigma ds = 0,
\end{equation*}
the proof follows along the steps of \cref{lem:phi1phi1}.
\end{proof}
For the split exponential integrators that we will consider in
\cref{sec:glob} we need local error bounds
for the $\varphi_2$ function. Therefore,
we state the following proposition just for the case $\ell=2$.
\begin{proposition}\label{prop:phi2phi2}
  Let \cref{assum:scs,assum:commut}
  be valid. Then, the following
bounds on the local error
\begin{equation*}
\lambda_4=
  \norm{\tau\varphi_2(\tau(A+B))v-2\tau\varphi_2(\tau A)\varphi_2(\tau B)v}
\end{equation*}
hold:
\begin{enumerate}[label=(\alph*)]
\item if $v\in\mathcal{D}(BA)$,
  then $\lambda_4 \leq C\tau^3$;
\item if $v\in\mathcal{D}(A)$, then
  $\lambda_4 \leq C\tau^2$;
\item if $v\not\in\mathcal{D}(A)$,
  then $\lambda_4 \leq C\tau$.
\end{enumerate}
\end{proposition}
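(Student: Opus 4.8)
The plan is to follow the template of the proofs of \cref{prop:phi1phi1,prop:expexp}, using \cref{lem:phiellphiell} specialized to $\ell = 2$ for the sharpest bound and its intermediate stages for the two weaker ones.

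For statement (a), I would invoke \cref{lem:phiellphiell} with $\ell = 2$ directly. In the resulting triple integral the only unbounded operator acting on $v$ is $BA$, sitting between the uniformly bounded semigroup factor $\rme^{(\tau-\xi)A}$ and the uniformly bounded operator $\varphi_1((\tau-s)B)$; the accompanying scalar kernel $\tfrac{2}{\tau^{3}}\,\sigma s\,(\tau - s)$, integrated over $\{0\le s,\sigma\le\tau,\ s\le\xi\le\sigma\}$, contributes a factor of order $\tau^{3}$. Hence, for $v\in\mathcal{D}(BA)$, the bounds \cref{eq:expbound} together with $\norm{\varphi_1(tB)}\le C$ give $\lambda_4\le C\tau^{3}$.

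For statements (b) and (c), I would retrace the derivation underlying \cref{lem:phiellphiell} (which itself mimics \cref{lem:phi1phi1}) and halt it one or two substitutions earlier, exactly as \cref{prop:phi1phi1} halts at \cref{eq:phi1phi1_2} and \cref{eq:phi1phi1_1}. Concretely, starting from $\tau\varphi_2(\tau(A+B))-2\tau\varphi_2(\tau A)\varphi_2(\tau B)$ written via the integral definition \cref{eq:phidef}, inserting $\rme^{(\tau-\sigma)A}=\rme^{(\tau-s)A}-\int_s^\sigma\rme^{(\tau-\xi)A}A\,d\xi$, and using \cref{eq:sgp} and the vanishing-moment identity of \cref{lem:phiellphiell} produces the $\ell=2$ analogue of \cref{eq:phi1phi1_2}: a double integral of size $\mathcal{O}(\tau^{2})$ in which a single factor $A$ acts on $v$. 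Bounding this with \cref{eq:expbound} needs only $v\in\mathcal{D}(A)$ and gives $\lambda_4\le C\tau^{2}$, which is statement (b). Stopping even earlier, at the difference of the two integral formulas for $\tau\varphi_2$ (the $\ell=2$ analogue of \cref{eq:phi1phi1_1}, with no derivative on $v$), requires only $v\in X$ and the uniform bounds on the semigroups and on $\varphi_2$, yielding $\lambda_4\le C\tau$, which is statement (c).

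I do not anticipate a genuine obstacle: the argument is essentially a verbatim transcription of the $\ell=1$ case. The one item of real bookkeeping is that \cref{lem:phiellphiell} does not record these intermediate identities for general $\ell$, so I would write them out for $\ell=2$; the only point needing a moment's care is checking that the extra weight $(\sigma s)^{\ell-1}$, which for $\ell=2$ equals $\sigma s$, does not spoil the order count — and it does not, since it stays uniformly bounded on $[0,\tau]^{2}$ while the power of $\tau$ is governed by the explicit prefactor $\tfrac{\ell}{\tau^{2\ell-1}(\ell-1)!}$ and the measure of the integration region.
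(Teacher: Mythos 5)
Your proposal is correct and follows essentially the same route as the paper, whose proof simply states that the result follows from \cref{lem:phiellphiell} together with the reasoning of \cref{prop:phi1phi1} (i.e., the full identity for case (a) and the intermediate stages of its derivation for cases (b) and (c)). Your order-counting for the $\ell=2$ prefactor and the extra weight $\sigma s$ is the right bookkeeping and matches what the paper leaves implicit.
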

\begin{proof}
  The result follows from \cref{lem:phiellphiell} and reasoning
  similar to that in \cref{prop:phi1phi1}.
\end{proof}

\begin{remark}\label{rem:dop}
  As mentioned at the beginning of the section, we fully developed the
  analysis for
  the case of two commuting operators. The case with $d$ operators follows
straightforwardly. In fact, we have
(see also~\cite{CC24bis}),
\begin{equation*}
  \lVert\tau \varphi_\ell(\tau(A_1+\cdots+A_d))v -
  (\ell!)^{d-1}\tau\varphi_\ell(\tau A_1)\cdots\varphi_\ell(\tau A_d)v\rVert
  \le C\tau^3,
\end{equation*}
if $v$ is a sufficiently smooth function that satisfies the
boundary conditions of the operators $A_1,\ldots,A_d$.
If not, we encounter similar order reductions as for the case of two operators.
\end{remark}
\subsection{Numerical experiments on the split
  approximations}\label{sec:locerr}
In the following experiments we compare the split approximations to
$\tau\varphi_1(\tau(A+B))v$ and
$\tau\varphi_2(\tau(A+B))v$ introduced
in the previous sections. To this aim, we consider
the space of continuous functions $X=C(\Omega)$, equipped with the infinity norm,
on the square domain $\Omega=(0,1)^2$.
The operators are $A=\partial_{xx}$ and $B=\partial_{yy}$ with homogeneous Dirichlet
boundary conditions. We consider smooth functions
$v=v(x,y)$ such that their partial derivatives satisfy or not the boundary conditions.
The operators are approximated by
standard second-order
finite differences (with $250^2$ interior discretization points).
We consider a common range of time step sizes $\tau$ and measure the
decay rate of the local error for the different approximations.
The selected functions $v$ are presented in \cref{tab:domains}.
In \cref{tab:rates} we collect the expected decay rates of the
local errors, according to the results obtained in the previous
sections.
The numerical results, summarized in \cref{fig:localerrors}, match satisfactorily
the theoretical findings.

\begin{table}[!ht]
  \renewcommand{\arraystretch}{1.1}
  \caption{Summary of the functions $v=v(x,y)$ and of the split
    approximations to $\tau\varphi_1(\tau(A+B))v$ and
    $\tau\varphi_2(\tau(A+B))v$,
    see \cref{eq:expphi,eq:expexp,eq:phiphi,eq:phiellphiell},
    employed in the numerical experiments of
    \cref{sec:locerr}.
    Here $A=\partial_{xx}$, $B=\partial_{yy}$
    (with homogeneous Dirichlet boundary conditions) and $X=C((0,1)^2)$.}
  \label{tab:loc}
  \addtocounter{table}{-1}
  \centering

  \subfloat[Functions $v=v(x,y)$.\label{tab:domains}]{
        \begin{tabular}{l|c|c|c|c|c}
      $v=v(x,y)$ & $\mathcal{D}(A)$ & $\mathcal{D}(A^2)$ &
      $\mathcal{D}(BA)$ & $\mathcal{D}(A+B)$ & $\mathcal{D}((A+B)^2)$\\
    \hline
    $v_1=4^6(x(1-x)y(1-y))^3$ & \checkmark & \checkmark & \checkmark  & \checkmark & \checkmark\\
    $v_2=4^4(x(1-x))^3y(1-y)$ & \checkmark & \checkmark & \checkmark  & \checkmark & $\times$\\
    $v_3=4^2x(1-x)y(1-y)$ & \checkmark & $\times$ & \checkmark  & \checkmark & $\times$\\
    $v_4=4x(1-x)$ & \checkmark & $\times$ & $\times$  & $\times$ & $\times$\\
    $v_5=1$ & $\times$ & $\times$ & $\times$  & $\times$ & $\times$\\
\end{tabular}}
 
\subfloat[Expected decay rates of the local errors.\label{tab:rates}]{
  \begin{tabular}{c|c|c|c||c}
    $v$ & $\tau\rme^{\frac{\tau}{2} A}\varphi_1(\tau B)v$ &
    $\tau\rme^{\frac{\tau}{2} A}\rme^{\frac{\tau}{2} B}v$ &
    $\tau\varphi_1(\tau A)\varphi_1(\tau B)v$ &
    $2\tau\varphi_2(\tau A)\varphi_2(\tau B)v$\\
    \hline
    $v_1$ & 3 & 3 & 3 & 3\\
    $v_2$ & 3 & 2 & 3 & 3\\
    $v_3$ & 2 & 2 & 3 & 3\\
    $v_4$ & 2 & 1 & 2 & 2\\
    $v_5$ & 1 & 1 & 1 & 1
  \end{tabular}}
\stepcounter{table}
\end{table}
\begin{figure}[!htbp]
  \centering
%
%
\begin{tikzpicture}

\begin{axis}[%
width=1.7in,
height=1.3in,
at={(0.769in,0.477in)},
scale only axis,
xmode=log,
xmin=1e-5,
xmax=1e-1,
xminorticks=true,
xlabel style={font=\color{white!15!black}},
xlabel={$\tau$},
ymode=log,
ymin=1e-13,
ymax=1e-2,
yminorticks=true,
ylabel style={font=\color{white!15!black}},
ylabel={Local error for $v_1$},
xtick = {1e-5,1e-3,1e-1},
xticklabels = {$10^{-5}$,$10^{-3}$,$10^{-1}$},
axis background/.style={fill=white},
legend style={at={(0.03,0.65)}, anchor=south west, legend cell align=left,
              align=left, draw=white!15!black, nodes={scale=0.65, transform shape}}
]

\addplot [color=red, only marks,line width=1pt, mark size = 3.5pt, mark=o, mark options={solid, red}]
  table[row sep=crcr]{%
0.015625	0.000196800256324108\\
0.00390625	6.59571613521136e-06\\
0.000976562499999999	1.43283760789678e-07\\
0.000244140625	2.5440361607379e-09\\
6.10351562499999e-05	4.1805181484421e-11\\
};
\addlegendentry{$\tau\rme^{\frac{\tau}{2}A}\varphi_1(\tau B)v$}

\addplot [color=blue,  line width=1pt, mark size = 3.5pt,only marks, mark=x, mark options={solid, blue}]
  table[row sep=crcr]{%
0.015625	0.000303339664783273\\
0.00390625	7.4128458303511e-06\\
0.000976562499999999	1.42360434096912e-07\\
0.000244140625	2.5405583927564e-09\\
6.10351562499999e-05	4.17937110186845e-11\\
};
\addlegendentry{$\tau\rme^{\frac{\tau}{2}A}\rme^{\frac{\tau}{2}B}v$}


\addplot [color=magenta, only marks,line width=1pt, mark size = 3.5pt, mark=+, mark options={solid, magenta}]
  table[row sep=crcr]{%
0.015625	8.92409740465576e-05\\
0.00390625	2.37325827525877e-06\\
0.000976562499999999	4.26446397091369e-08\\
0.000244140625	6.90136975426682e-10\\
6.10351562499999e-05	1.08785573865039e-11\\
};
\addlegendentry{$\tau\varphi_1(\tau A)\varphi_1(\tau B)v$}

\end{axis}

\begin{axis}[%
width=1.7in,
height=1.3in,
at={(0.769in,0.477in)},
scale only axis,
xmode=log,
xmin=1e-5,
xmax=1e-1,
xminorticks=false,
xmajorticks=false,
ymode=log,
ymin=1e-13,
ymax=1e-2,
yminorticks=false,
xtick = {},
xticklabels = {},
ytick = {},
yticklabels = {},
legend style={at={(0.4,0.06)}, anchor=south west, legend cell align=left,
              align=left, draw=white!15!black,nodes={scale=0.65, transform shape}}
]
\addplot [color=green, only marks, line width=1pt, mark size = 3.5pt, mark=asterisk, mark options={solid, green}]
  table[row sep=crcr]{%
0.015625	3.43053580106871e-05\\
0.00390625	8.21346022527928e-07\\
0.000976562499999999	1.43488956629608e-08\\
0.000244140625	2.30585596393252e-10\\
6.10351562499999e-05	3.62831124059394e-12\\
};
\addlegendentry{$2\tau\varphi_2(\tau A)\varphi_2(\tau B)v$}

\addplot [color=black, line width=1pt, forget plot]
  table[row sep=crcr]{%
0.015625	6.08729613986724e-05\\
6.10351562499999e-05	3.62831124059394e-12\\
};
\end{axis}

\end{tikzpicture}%
%
%
\begin{tikzpicture}

\begin{axis}[%
width=1.7in,
height=1.3in,
at={(0.769in,0.477in)},
scale only axis,
xmode=log,
xmin=1e-5,
xmax=1e-1,
xminorticks=true,
xlabel style={font=\color{white!15!black}},
xlabel={$\tau$},
ymode=log,
ymin=1e-13,
ymax=1e-2,
yminorticks=true,
ylabel style={font=\color{white!15!black}},
ylabel={Local error for $v_2$},
xtick = {1e-5,1e-3,1e-1},
xticklabels = {$10^{-5}$,$10^{-3}$,$10^{-1}$},
axis background/.style={fill=white},
legend style={at={(0.17,0.158)}, anchor=south west, legend cell align=left, align=left, draw=white!15!black}
]
\addplot [color=blue,  line width=1pt, mark size = 3.5pt,only marks, mark=x, mark options={solid, blue}]
  table[row sep=crcr]{%
0.015625	0.000166339484640676\\
0.00390625	5.76413240426996e-06\\
0.000976562499999999	1.78223691809891e-07\\
0.000244140625	1.03746622820659e-08\\
6.10351562499999e-05	6.06278494013069e-10\\
};

\addplot [color=black, dashed, line width = 1pt, forget plot]
  table[row sep=crcr]{%
0.015625	3.97330673836404e-05\\
6.10351562499999e-05	6.06278494013069e-10\\
};
\addplot [color=red, only marks,line width=1pt, mark size = 3.5pt, mark=o, mark options={solid, red}]
  table[row sep=crcr]{%
0.015625	0.000166411206988338\\
0.00390625	5.76413240508537e-06\\
0.000976562499999999	1.27031903509152e-07\\
0.000244140625	2.33786427504143e-09\\
6.10351562499999e-05	3.9554033175186e-11\\
};


  \addplot [color=magenta, only marks,line width=1pt, mark size = 3.5pt, mark=+, mark options={solid, magenta}]
  table[row sep=crcr]{%
0.015625	4.23763124930785e-05\\
0.00390625	8.68323490377935e-07\\
0.000976562499999999	1.45537075520348e-08\\
0.000244140625	2.31433698512926e-10\\
6.10351562499999e-05	3.63207747603216e-12\\
};


\addplot [color=green, only marks, line width=1pt, mark size = 3.5pt, mark=asterisk, mark options={solid, green}]
  table[row sep=crcr]{%
0.015625	1.51856192998822e-05\\
0.00390625	2.94934862929332e-07\\
0.000976562499999999	4.87405985352945e-09\\
0.000244140625	7.72350544793725e-11\\
6.10351562499999e-05	1.21104758572755e-12\\
};

\addplot [color=black, line width=1pt, forget plot]
  table[row sep=crcr]{%
0.015625	2.03180069320296e-05\\
6.10351562499999e-05	1.21104758572755e-12\\
};
\end{axis}

\end{tikzpicture}
%
%
\begin{tikzpicture}

\begin{axis}[%
width=1.7in,
height=1.3in,
at={(0.769in,0.477in)},
scale only axis,
xmode=log,
xmin=1e-5,
xmax=1e-1,
xminorticks=true,
xlabel style={font=\color{white!15!black}},
xlabel={$\tau$},
ymode=log,
ymin=1e-13,
ymax=1e-2,
yminorticks=true,
ylabel style={font=\color{white!15!black}},
ylabel={Local error for $v_3$},
xtick = {1e-5,1e-3,1e-1},
xticklabels = {$10^{-5}$,$10^{-3}$,$10^{-1}$},
axis background/.style={fill=white},
legend style={at={(0.17,0.158)}, anchor=south west, legend cell align=left, align=left, draw=white!15!black}
]
\addplot [color=blue,  line width=1pt, mark size = 3.5pt,only marks, mark=x, mark options={solid, blue}]
  table[row sep=crcr]{%
0.015625	5.36563117408319e-05\\
0.00390625	2.82727222706821e-06\\
0.000976562499999999	1.68582237082646e-07\\
0.000244140625	1.02720121362074e-08\\
6.10351562499999e-05	6.04527877616397e-10\\
};

\addplot [color=black, dashed, line width = 1pt, forget plot]
  table[row sep=crcr]{%
0.015625	3.96183389874681e-05\\
6.10351562499999e-05	6.04527877616397e-10\\
};
\addplot [color=red, only marks,line width=1pt, mark size = 3.5pt, mark=o, mark options={solid, red}]
  table[row sep=crcr]{%
0.015625	5.35166573932117e-05\\
0.00390625	2.82727222486683e-06\\
0.000976562499999999	1.68582237085343e-07\\
0.000244140625	1.02720121362565e-08\\
6.10351562499999e-05	6.04527877616397e-10\\
};

\addplot [color=magenta, only marks,line width=1pt, mark size = 3.5pt, mark=+, mark options={solid, magenta}]
  table[row sep=crcr]{%
0.015625	2.03068735090024e-05\\
0.00390625	3.17891439163049e-07\\
0.000976562499999999	4.96705376828999e-09\\
0.000244140625	7.76102170268848e-11\\
6.10351562499999e-05	1.21265986381474e-12\\
};


\addplot [color=green, only marks, line width=1pt, mark size = 3.5pt, mark=asterisk, mark options={solid, green}]
  table[row sep=crcr]{%
0.015625	6.77658961321069e-06\\
0.00390625	1.05963813033533e-07\\
0.000976562499999999	1.65568458688212e-09\\
0.000244140625	2.58700721299721e-11\\
6.10351562499999e-05	4.04220081377511e-13\\
};

\addplot [color=black, line width=1pt, forget plot]
  table[row sep=crcr]{%
0.015625	6.78168761680808e-06\\
6.10351562499999e-05	4.04220081377511e-13\\
};
\end{axis}

\end{tikzpicture}%
%
%
\begin{tikzpicture}

\begin{axis}[%
width=1.7in,
height=1.3in,
at={(0.769in,0.477in)},
scale only axis,
xmode=log,
xmin=1e-5,
xmax=1e-1,
xminorticks=true,
xlabel style={font=\color{white!15!black}},
xlabel={$\tau$},
ymode=log,
ymin=1e-13,
ymax=1e-2,
yminorticks=true,
ylabel style={font=\color{white!15!black}},
ylabel={Local error for $v_4$},
xtick = {1e-5,1e-3,1e-1},
xticklabels = {$10^{-5}$,$10^{-3}$,$10^{-1}$},
axis background/.style={fill=white},
legend style={at={(0.17,0.158)}, anchor=south west, legend cell align=left, align=left, draw=white!15!black}
]
\addplot [color=blue,  line width=1pt, mark size = 3.5pt,only marks, mark=x, mark options={solid, blue}]
  table[row sep=crcr]{%
0.015625	0.00108576550990061\\
0.00390625	0.000266346320842993\\
0.000976562499999999	6.60226552357389e-05\\
0.000244140625	1.62202203495621e-05\\
6.103515625e-05	3.78308905479757e-06\\
};

\addplot [color=black, dotted, line width = 1pt, forget plot]
  table[row sep=crcr]{%
0.015625	0.000968470798028176\\
6.103515625e-05	3.78308905479757e-06\\
};
\addplot [color=red, only marks,line width=1pt, mark size = 3.5pt, mark=o, mark options={solid, red}]
  table[row sep=crcr]{%
0.015625	0.000101108334875885\\
0.00390625	6.31168109558795e-06\\
0.000976562499999999	3.93085180045926e-07\\
0.000244140625	2.43077565191487e-08\\
6.10351562499999e-05	1.37554021995233e-09\\
};

\addplot [color=magenta, only marks,line width=1pt, mark size = 3.5pt, mark=+, mark options={solid, magenta}]
  table[row sep=crcr]{%
0.015625	9.48186809053232e-05\\
0.00390625	5.92368468297146e-06\\
0.000976562499999999	3.69371922729223e-07\\
0.000244140625	2.25972849365646e-08\\
6.10351562499999e-05	1.33289155993143e-09\\
};


\addplot [color=green, only marks, line width=1pt, mark size = 3.5pt, mark=asterisk, mark options={solid, green}]
  table[row sep=crcr]{%
0.015625	4.14550325468246e-05\\
0.00390625	2.58784042375518e-06\\
0.000976562499999999	1.60790287457083e-07\\
0.000244140625	9.91689727669716e-09\\
6.10351562499999e-05	5.89536471906398e-10\\
};

\addplot [color=black, dashed, line width=1pt, forget plot]
  table[row sep=crcr]{%
0.015625	3.86358622228578e-05\\
6.10351562499999e-05	5.89536471906398e-10\\
};
\end{axis}

\end{tikzpicture}
%
%
\begin{tikzpicture}

\begin{axis}[%
width=1.7in,
height=1.3in,
at={(0.769in,0.477in)},
scale only axis,
xmode=log,
xmin=1e-5,
xmax=1e-1,
xminorticks=true,
xlabel style={font=\color{white!15!black}},
xlabel={$\tau$},
ymode=log,
ymin=1e-13,
ymax=1e-2,
yminorticks=true,
ylabel style={font=\color{white!15!black}},
ylabel={Local error for $v_5$},
xtick = {1e-5,1e-3,1e-1},
xticklabels = {$10^{-5}$,$10^{-3}$,$10^{-1}$},
axis background/.style={fill=white},
legend style={at={(0.17,0.158)}, anchor=south west, legend cell align=left, align=left, draw=white!15!black}
]
\addplot [color=blue,  line width=1pt, mark size = 3.5pt,only marks, mark=x, mark options={solid, blue}]
  table[row sep=crcr]{%
0.015625	0.00153837129212697\\
0.00390625	0.000383410978494658\\
0.000976562499999999	9.52078757190874e-05\\
0.000244140625	2.28835351376397e-05\\
6.103515625e-05	4.98867680392262e-06\\
};

\addplot [color=red, only marks,line width=1pt, mark size = 3.5pt, mark=o, mark options={solid, red}]
  table[row sep=crcr]{%
0.015625	0.00117872951576602\\
0.00390625	0.000294330494825977\\
0.000976562499999999	7.32604967307338e-05\\
0.000244140625	1.7962482105881e-05\\
6.103515625e-05	4.1755986256652e-06\\
};

\addplot [color=magenta, only marks,line width=1pt, mark size = 3.5pt, mark=+, mark options={solid, magenta}]
  table[row sep=crcr]{%
0.015625	0.000523897352670882\\
0.00390625	0.000130755537175136\\
0.000976562499999999	3.22760518583956e-05\\
0.000244140625	7.85431813516792e-06\\
6.103515625e-05	1.78209711992403e-06\\
};


\addplot [color=green, only marks, line width=1pt, mark size = 3.5pt, mark=asterisk, mark options={solid, green}]
  table[row sep=crcr]{%
0.015625	0.000316959793468632\\
0.00390625	7.90372405817723e-05\\
0.000976562499999999	1.93372190048657e-05\\
0.000244140625	4.66836597711959e-06\\
6.103515625e-05	1.03370544296812e-06\\
};

\addplot [color=black, dotted, line width=1pt, forget plot]
  table[row sep=crcr]{%
0.015625	0.000264628593399838\\
6.103515625e-05	1.03370544296812e-06\\
};
\end{axis}

\end{tikzpicture}%
  \caption{Observed decay rates of the local errors (in the
    infinity norm) of the split approximations to
    $\tau\varphi_1(\tau (A+B))v$ and $\tau\varphi_2(\tau(A+B))v$ for
    the different functions $v=v(x,y)$, see \cref{tab:loc}.
    The slope of the dotted line is one, that of the dashed line is two
    and that of the solid line is three.}
  \label{fig:localerrors}
\end{figure}
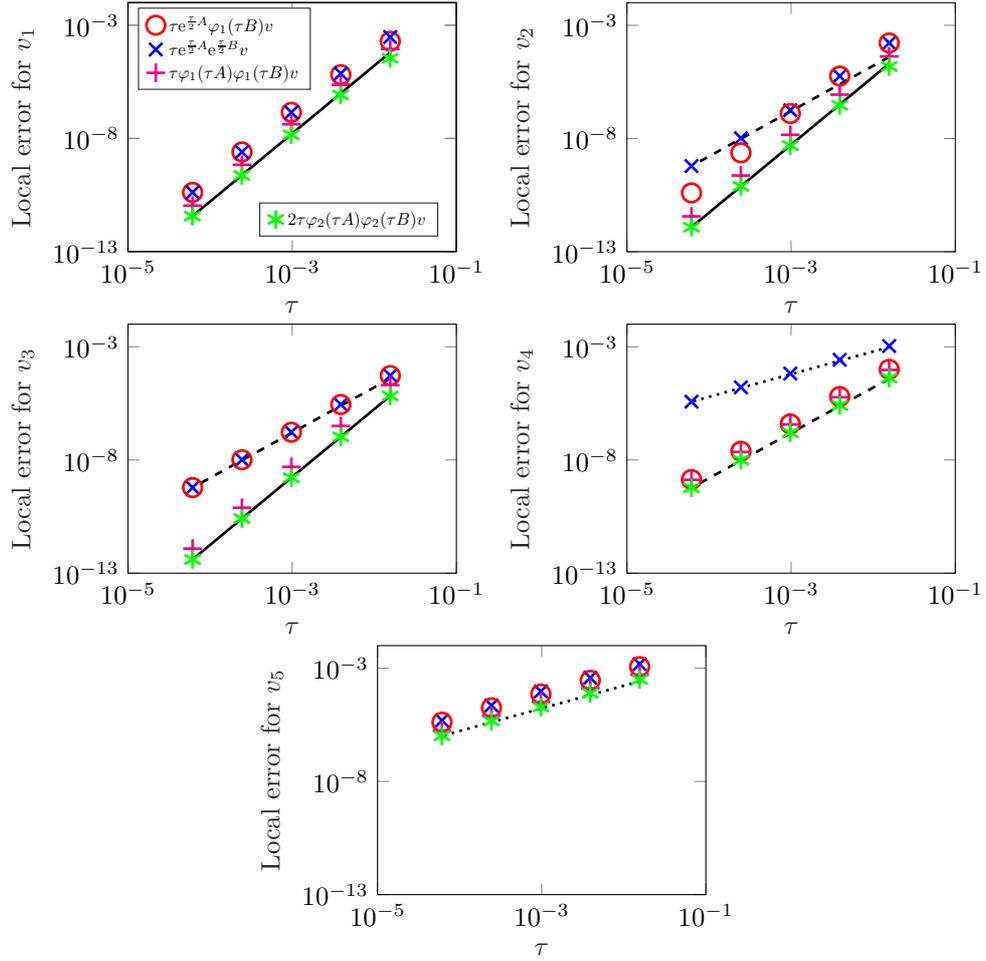
\section{Convergence analysis of two split versions
  of ETD2RK}\label{sec:glob}
In this section, we apply the local error bounds that we have found
in \cref{sec:loc}
to show the convergence (in the stiff sense)
of two split versions of a
popular second-order exponential Runge--Kutta integrator, also known in the
literature as ETD2RK. To this aim, we consider the abstract evolution
equation \cref{eq:AODE}, for which the integrator can be written
as~\cite[Tableau 5.3, $c_2=1$]{HO05bis}
\begin{subequations}
\begin{equation}\label{eq:ETD2RKg}
\begin{aligned}
  \widetilde{U}_{n2}&=\rme^{\tau(A+B)}\widetilde{u}_n+
  \tau\varphi_1(\tau(A+B))g(\widetilde{u}_n)\\
  \widetilde{u}_{n+1}&=\widetilde{U}_{n2}+
  \tau\varphi_2(\tau(A+B))
  (g(\widetilde{U}_{n2})-g(\widetilde{u}_n)),
\end{aligned}
\end{equation}
or equivalently as
\begin{equation}\label{eq:ETD2RKf}
\begin{aligned}
  \widetilde{U}_{n2}&=\widetilde{u}_n + \tau\varphi_1(\tau(A+B))f(\widetilde{u}_n),\\
  \widetilde{u}_{n+1}&=\widetilde{U}_{n2}+
  \tau\varphi_2(\tau(A+B))
  (g(\widetilde{U}_{n2})-g(\widetilde{u}_n)).
\end{aligned}
\end{equation}
\end{subequations}
Here  $\widetilde u_n\approx u(t_n)$, with $t_n=n\tau$ and
$\tau = T/N$ ($N$ is the total number of time steps).
To study the global error we make the following assumptions.
\begin{assumption}\label{assum:analytical}
  Let $X$ be a Banach space with norm $\norm{\cdot}$.
  Let
$A \colon \mathcal{D}(A)\subset X \rightarrow X$ and
$B \colon \mathcal{D}(B)\subset X \rightarrow X$ be linear operators.
We assume
that $A$, $B$ and $A+B$ generate analytic
semigroups $\{\rme^{tA}\}_{t \geq 0}$, $\{\rme^{tB}\}_{t \geq 0}$
and $\{\rme^{t(A+B)}\}_{t \geq 0}$.
\end{assumption}
Note that \cref{assum:analytical} implies that the \emph{parabolic smoothing} bound
\begin{equation*}
  \norm{(-tA)^\alpha\rme^{tA}}\le C,\quad \alpha>0,\quad t\in[0,T]
\end{equation*}
holds (similarly for the operators $B$ and $A+B$).
\begin{assumption}\label{assum:Lipschitz}
We assume that equation~\cref{eq:AODE} possesses a sufficiently
  smooth solution $u\colon[0,T]\to X$ with derivatives in $X$ and
  that $g\colon X\to X$ is twice Fr\'echet differentiable,
with uniformly bounded derivatives.
This in particular implies that $g$ is a Lipschitz function.
\end{assumption}
\subsection{Split exponential integrator ERK2L}
We start by considering the following split version of
integrator~\cref{eq:ETD2RKg}
\begin{equation}\label{eq:ETD2RKdsg}
\begin{aligned}
  U_{n2}&=\rme^{\tau A}\rme^{\tau B}u_n+
  \tau\varphi_1(\tau A)\varphi_1(\tau B)g(u_n),\\
  u_{n+1}&=U_{n2}+
  2\tau\varphi_2(\tau A)\varphi_2(\tau B)
  (g(U_{n2})-g(u_n)),
\end{aligned}
\end{equation}
that we call ERK2L. The same idea was employed in~\cite[Formula 22]{CC24} to obtain
a directional split version of the exponential Euler method.
Notice that integrator \cref{eq:ETD2RKdsg} is exact for \emph{linear}
homogeneous problems (that is, $g\equiv0$).
\begin{theorem}\label{thm:ERK2L}
  Let \cref{assum:commut,assum:analytical,assum:Lipschitz}
  be valid. Then,
  the following bounds on the global error of scheme~\cref{eq:ETD2RKdsg}, i.e.,
  \begin{equation*}
e_{n,\mathrm{L}}=\norm{u_n-u(t_n)},\quad 0\le n\tau \leq T,
  \end{equation*}
hold:
\begin{enumerate}[label=(\alph*)]
\item if $g(u(t))\in\mathcal{D}(BA)$, then
  $e_{n,\mathrm{L}}\leq C\tau^2$;
\item if $g(u(t))\in\mathcal{D}(A)$,
  then $e_{n,\mathrm{L}}\leq C\tau^2(1+\lvert\log \tau\rvert)$;\label{caseb}
  \item if $g(u(t))\not\in\mathcal{D}(A)$, then
  $e_{n,\mathrm{L}}\leq C\tau$.
\end{enumerate}
The constant $C$ is independent of $n$ and $\tau$, but
it depends on $T$.
\end{theorem}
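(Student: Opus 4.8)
The plan is to follow the standard convergence analysis of exponential Runge--Kutta methods for parabolic problems, in the spirit of \cite{HO05bis,HO10}, comparing one step of \cref{eq:ETD2RKdsg} with one step of ETD2RK \cref{eq:ETD2RKg}. First I would insert the exact solution: with $\widehat U_{n2}=\rme^{\tau A}\rme^{\tau B}u(t_n)+\tau\varphi_1(\tau A)\varphi_1(\tau B)g(u(t_n))$ and $\widehat u_{n+1}=\widehat U_{n2}+2\tau\varphi_2(\tau A)\varphi_2(\tau B)(g(\widehat U_{n2})-g(u(t_n)))$, and using that by \cref{eq:sgp} the substitution $\rme^{\tau A}\rme^{\tau B}=\rme^{\tau(A+B)}$ introduces no error, one obtains the defect decomposition
\begin{equation*}
  \widehat u_{n+1}-u(t_{n+1})=\delta_{n+1}^{\mathrm{ETD}}-d_n^{(1)}-d_n^{(2)}+r_{n+1},\qquad \norm{r_{n+1}}\le C\tau\norm{d_n^{(1)}},
\end{equation*}
where $\delta_{n+1}^{\mathrm{ETD}}$ is the local error of ETD2RK, whose accumulated contribution to the global error is $\mathcal{O}(\tau^2)$ by the classical stiff convergence result \cite[Sect.~4--5]{HO05bis} (note $c_2=1$), $d_n^{(1)}=\tau\bigl(\varphi_1(\tau(A+B))-\varphi_1(\tau A)\varphi_1(\tau B)\bigr)g(u(t_n))$ is the $\varphi_1$--splitting error of \cref{lem:phi1phi1}, and $d_n^{(2)}=\bigl(\tau\varphi_2(\tau(A+B))-2\tau\varphi_2(\tau A)\varphi_2(\tau B)\bigr)w_n$ with $w_n=g(\widehat U_{n2})-g(u(t_n))$; here $\norm{w_n}=\mathcal{O}(\tau)$, since $\widehat U_{n2}-u(t_n)=(\rme^{\tau(A+B)}-I)u(t_n)+\tau\varphi_1(\tau A)\varphi_1(\tau B)g(u(t_n))=\mathcal{O}(\tau)$ by the smoothness of $u$ (so that $(A+B)u(t_n)=u'(t_n)-g(u(t_n))\in X$) and the Lipschitz property of $g$. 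I would then form the global error recursion for $e_{n,\mathrm L}=u_n-u(t_n)$, linearise the nonlinear contributions by the mean value theorem, and resolve it by discrete variation of constants, so that $e_{n,\mathrm L}$ becomes a sum of propagated defects $\rme^{m\tau(A+B)}(\delta_{j+1}^{\mathrm{ETD}}-d_j^{(1)}-d_j^{(2)})$ with $m=n-1-j$, up to Lipschitz remainders that a discrete Gronwall inequality absorbs (this is where the dependence of $C$ on $T$ enters). The stability of \cref{eq:ETD2RKdsg} that this requires follows from \cref{eq:expbound}, the boundedness of the $\varphi$ functions, and \cref{assum:Lipschitz}.

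The heart of the argument is the estimate of the propagated splitting errors. For $m=0$ (the last step, which sees no further propagator) one uses the plain bounds of \cref{prop:phi1phi1,prop:phi2phi2}: since $\norm{w_j}=\mathcal{O}(\tau)$, these give $\mathcal{O}(\tau^3),\mathcal{O}(\tau^2),\mathcal{O}(\tau)$ for $d_j^{(1)}$ in cases (a),(b),(c) and $\mathcal{O}(\tau^2)$ for $d_j^{(2)}$ in all three cases, which already respect the claimed orders. For $m\ge1$ one rewrites the splitting-error operators, by the same manipulations as in the proofs of \cref{lem:phi1phi1,lem:phiellphiell} together with the recursion \cref{eq:phirec}, as double integrals of \emph{differences of semigroup evaluations}; for instance
\begin{equation*}
  \tau\varphi_1(\tau(A+B))-\tau\varphi_1(\tau A)\varphi_1(\tau B)=\frac1\tau\int_0^\tau\!\!\int_0^\tau\bigl(\rme^{rA}-\rme^{\rho A}\bigr)\bigl(\rme^{rB}-I\bigr)\,d\rho\,dr,
\end{equation*}
and analogously for the $\varphi_2$--error. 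Composing with $\rme^{m\tau(A+B)}=\rme^{m\tau A}\rme^{m\tau B}$ and invoking the parabolic smoothing bound of \cref{assum:analytical} turns each semigroup difference into a factor $\mathcal{O}(1/m)$. Counting powers of $\tau$ and of $1/m$ then yields, for $m\ge1$, $\norm{\rme^{m\tau(A+B)}d_j^{(1)}}\le C\tau^3$ in case (a) (already from \cref{lem:phi1phi1} with a merely bounded propagator), $\le C\tau^2/m$ in case (b) (using $g(u(t))\in\mathcal{D}(A)$ to replace one semigroup difference by a factor $\tau\norm{Ag(u(t))}$ and smoothing the other), $\le C\tau/m^2$ in case (c) (smoothing both), while $\norm{\rme^{m\tau(A+B)}d_j^{(2)}}\le C\tau^2/m^2$ in all cases (smoothing both differences, using only $\norm{w_j}=\mathcal{O}(\tau)$). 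Summing over $m=1,\dots,n-1\le T/\tau$ gives $\sum_m C\tau^3\le C\tau^2$ in case (a), the harmonic sum $\sum_m C\tau^2/m\le C\tau^2(1+\lvert\log\tau\rvert)$ in case (b), and the convergent sum $\sum_m C\tau/m^2\le C\tau$ in case (c); $d_j^{(2)}$ always contributes at most $\sum_m C\tau^2/m^2\le C\tau^2$, and is hence never the bottleneck. Together with the $\mathcal{O}(\tau^2)$ from $\delta^{\mathrm{ETD}}$, the remainders $r_{j+1}$, and the $m=0$ terms, this gives precisely the three stated bounds.

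The step I expect to be most delicate is the bookkeeping around the internal-stage error $d_j^{(1)}$: it enters $u_{j+1}$ both directly and, through $g$, via the second stage, and it must be propagated by $\rme^{(n-1-j)\tau(A+B)}$ with the refined $1/m$-type estimates, while the accompanying Lipschitz corrections are fed consistently into the discrete Gronwall inequality without destroying those estimates. A second, more technical point is the derivation of the semigroup-difference forms of the $\varphi_1$-- and $\varphi_2$--error operators in a shape that exposes enough differences to absorb the smoothing while requiring of the vector only $\norm{w_j}=\mathcal{O}(\tau)$ (and no domain condition on $g(\widehat U_{n2})-g(u(t_n))$, which is what lets one avoid commuting $A$ or $B$ past $g'$); these identities are routine but somewhat lengthy. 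Finally, the $m=0$ contribution genuinely escapes the parabolic smoothing, and this is exactly why case (b) yields only $\mathcal{O}(\tau^2(1+\lvert\log\tau\rvert))$ rather than the full second order.
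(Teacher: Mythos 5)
Your proposal is correct and follows essentially the same route as the paper: decompose the defect into the ETD2RK local error plus the $\varphi_1$- and $\varphi_2$-splitting defects, bound these via \cref{prop:phi1phi1,prop:phi2phi2}, propagate with $\rme^{m\tau(A+B)}$ using parabolic smoothing to gain factors of $1/m$ (whence the harmonic sum and the logarithm in case (b)), and close with a discrete Gronwall argument. The only cosmetic deviations are that you cite the classical ETD2RK result instead of re-deriving it by Taylor-expanding $h(t)=g(u(t))$ (the paper's terms $\rho_{n,3},\rho_{n,4},\rho_{n,8},\rho_{n,9}$ with the cancellation $\rho_{n,8}=-\rho_{n,3}$), you attach the $\varphi_2$-defect to $g(\widehat U_{n2})-g(u(t_n))$ rather than to $h(t_{n+1})-h(t_n)$, and you spell out the semigroup-difference representation of the splitting errors that the paper leaves implicit in its appeal to parabolic smoothing.
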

\begin{proof}
Let $h(t)=g(u(t))$. We
consider the variation-of-constants formula~\cref{eq:voc} with $t=t_{n+1}$
and the Taylor expansion
\begin{equation}\label{eq:hexpand}
  h(t_n+s) = h(t_n) + h'(t_n)s + \int_0^s(s-\sigma)h''(t_n+\sigma)d\sigma.
\end{equation}
Then, for the stage $U_{n2}$ (which approximates
$u(t_n+\tau)=u(t_{n+1})$) we have
\begin{equation*}
  U_{n2}-u(t_{n+1}) = \rme^{\tau(A+B)}\epsilon_n + \sum_{i=1}^4 \rho_{n,i},
\end{equation*}
where
\begin{equation*}
\begin{aligned}
  \epsilon_n &= u_n - u(t_n), \\
  \rho_{n,1} &= \tau\big(\varphi_1(\tau A)\varphi_1(\tau B) - \varphi_1(\tau(A+B))\big)h(t_n), \\
  \rho_{n,2} &= \tau\varphi_1(\tau A)\varphi_1(\tau B) \big(g(u_n) - h(t_n)\big), \\
  \rho_{n,3} &= - \tau^2\varphi_2(\tau(A+B))h'(t_n), \\
  \rho_{n,4} &= -\int_0^\tau\rme^{(\tau-s)(A+B)}\int_0^s(s-\sigma) h''(t_n+\sigma)d\sigma ds.
\end{aligned}
\end{equation*}
Note that $\rho_{n,1}$ satisfies
\begin{equation*}
\norm{\rho_{n,1}}\leq C\tau^\alpha \quad \text{with} \quad
\alpha=
\begin{cases}
3 & \text{if } h(t_n)\in\mathcal{D}(BA) \\
2 & \text{if } h(t_n)\in\mathcal{D}(A) \\
1 & \text{if } h(t_n)\notin\mathcal{D}(A) \\
\end{cases}
\end{equation*}
thanks to~\cref{prop:phi1phi1}.
Also, $\norm{\rho_{n,2}}\leq C\tau\norm{\epsilon_n}$ (since $g$ is Lipschitz) and
$\norm{\rho_{n,4}}\leq C\tau^3$. Then, for estimating the global error
\begin{equation*}
  \epsilon_{n+1}=u_{n+1}-u(t_{n+1}) = U_{n2}-u(t_{n+1}) + 2\tau\varphi_2(\tau A)\varphi_2(\tau B)\big(g(U_{n2})-g(u_n)\big)
\end{equation*}
we need bounds on the last term. To this aim, we write
\begin{equation*}
  g(U_{n2})-g(u_n) = \big(g(U_{n2}) - h(t_{n+1})\big) + \big(h(t_n)-g(u_n)\big) + \big(h(t_{n+1})-h(t_n)\big),
\end{equation*}
thus getting
\begin{equation*}
2\tau\varphi_2(\tau A)\varphi_2(\tau B)\big(g(U_{n2})-g(u_n)\big) = \sum_{i=5}^9 \rho_{n,i},
\end{equation*}
where
\begin{equation}\label{eq:rhonj}
  \begin{aligned}
    \rho_{n,5} &= 2\tau\varphi_2(\tau A)\varphi_2(\tau B)\big(g(U_{n2}) - h(t_{n+1})\big) ,\\
    \rho_{n,6} &= 2\tau\varphi_2(\tau A)\varphi_2(\tau B)\big(h(t_n)-g(u_n)\big), \\
    \rho_{n,7} &= \tau\big(2\varphi_2(\tau A)\varphi_2(\tau B) - \varphi_2(\tau(A+B))\big)\big(h(t_{n+1})-h(t_n)\big),\\
    \rho_{n,8} &=\tau^2\varphi_2(\tau(A+B))h'(t_n), \\
    \rho_{n,9} &= \tau\varphi_2(\tau(A+B))\int_0^\tau (\tau-\sigma)h''(t_n+\sigma)d\sigma.
  \end{aligned}
\end{equation}
Here we used again formula~\cref{eq:hexpand}, now with $s=\tau$. Note that, using
the fact that $g$ is Lipschitz, the term $\rho_{n,5}$ satisfies
$\norm{\rho_{n,5}}\leq C\tau\norm{\epsilon_n} + C\tau^{\min\{3,\alpha+1\}}$,
$\norm{\rho_{n,6}}\leq C\tau\norm{\epsilon_n}$, $\norm{\rho_{n,7}}\leq C\tau^{\alpha}$
(see~\cref{prop:phi2phi2}) and $\norm{\rho_{n,9}}\leq C\tau^3$.
Therefore, the recursion of the global error is
\begin{equation*}
\epsilon_{n+1} = \rme^{\tau(A+B)}\epsilon_n + \sum_{\substack{i=1\\i\neq 3,8}}^9 \rho_{n,i}
\end{equation*}
since $\rho_{n,8}=-\rho_{n,3}$. Then, if $h(t)\in\mathcal{D}(BA)$ we have
\begin{equation*}
  \norm{\epsilon_n} \leq C\tau\sum_{k=1}^{n-1} \norm{\epsilon_k} + C\tau^2.
\end{equation*}
Similarly, by exploiting the parabolic smoothing
property for the terms related to $\rho_{n,1}$ and $\rho_{n,7}$ we get
\begin{equation*}
  \norm{\epsilon_n} \leq C\tau\sum_{k=1}^{n-1} \norm{\epsilon_k} +
  C\tau^2(1+\lvert\log \tau \rvert) \quad \text{and} \quad
  \norm{\epsilon_n} \leq C\tau\sum_{k=1}^{n-1} \norm{\epsilon_k} + C\tau
\end{equation*}
for the cases $h(t)\in\mathcal{D}(A)$ and
$h(t)\notin\mathcal{D}(A)$, respectively. Finally, the application of a discrete
Gronwall lemma \cite[Lemma 2.15]{HO10} leads to the result.
\end{proof}

\subsection{Split exponential integrator ERK2}
We now consider the following split version of integrator~\cref{eq:ETD2RKf}
\begin{equation}\label{eq:ETD2RKdsf}
\begin{aligned}
  U_{n2}&=u_n+
  \tau\varphi_1(\tau A)\varphi_1(\tau B)f(u_n),\\
  u_{n+1}&=U_{n2}+
  2\tau\varphi_2(\tau A)\varphi_2(\tau B)
  (g(U_{n2})-g(u_n)),
\end{aligned}
\end{equation}
that we call ERK2. This is the directional split exponential integrator
employed, e.g., in~\cite{CC24bis}.
We start by studying the linear stability of
scheme~\cref{eq:ETD2RKdsf}, i.e., the stability of the recursion
\begin{equation*}
  u_{n+1} = R(\tau A,\tau B) u_n
\end{equation*}
with
\begin{equation*}
   R(\tau A,\tau B) = I + \tau\varphi_1(\tau A)\varphi_1(\tau B)(A+B).
\end{equation*}
To avoid the intricacies of functional analysis, we make the following assumptions.
\begin{assumption}\label{assum:diag}
We assume that there exists a bounded operator $V$, with bounded inverse,
such that the commuting operators $A$ and $B$ satisfy
\begin{equation*}
  A = V^{-1} M_{q_A} V \quad \text{and} \quad B=V^{-1} M_{q_B} V.
\end{equation*}
Here, $M_{q_A}$ and $M_{q_B}$ denote multiplication operators with functions
$q_A$ and $q_B$ having values in $(-\infty,0]$.
\end{assumption}
Note that, since $A$ and $B$ commute, they are transformed by the same operator
$V$.
\begin{assumption}\label{assum:bdddiag}
Let $M_q$ be a multiplication operator. We assume that 
\begin{equation*}
  \norm{M_q}\leq \norm{q}_\infty.
\end{equation*}
\end{assumption}
Note that this is in fact an equality, e.g., in
the space of continuous functions and in $L^p$ spaces, see~\cite{EN00}.
\begin{lemma}\label{lem:stab}
Let~\cref{assum:diag,assum:bdddiag} be valid.
Then, we have
\begin{equation}\label{eq:bdstabf}
  \norm{R(\tau A,\tau B)^n} \leq C
\end{equation}
for $0\leq n\tau \leq T$.
\end{lemma}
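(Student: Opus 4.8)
The plan is to diagonalize and reduce the claim to a scalar power‑boundedness estimate. By \cref{assum:diag}, conjugation with $V$ turns every operator built from $A$ and $B$ into a multiplication operator with the corresponding scalar symbol, so that
\begin{equation*}
  V\,R(\tau A,\tau B)\,V^{-1} = M_r,\qquad
  r = 1 + \tau\varphi_1(\tau q_A)\varphi_1(\tau q_B)(q_A+q_B),
\end{equation*}
where $r$ is the function obtained by evaluating the right‑hand side pointwise. Hence $R(\tau A,\tau B)^n = V^{-1}M_r^n V = V^{-1}M_{r^n}V$, and by \cref{assum:bdddiag},
\begin{equation*}
  \norm{R(\tau A,\tau B)^n} \le \norm{V^{-1}}\,\norm{V}\,\norm{r^n}_\infty
  = \norm{V^{-1}}\,\norm{V}\,\norm{r}_\infty^{\,n}.
\end{equation*}
So it suffices to show $\norm{r}_\infty\le 1$, after which \cref{eq:bdstabf} follows with $C=\norm{V^{-1}}\norm{V}$, independently of $n$ and $\tau$.

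For the scalar estimate, fix a point of the underlying space and set $a=\tau q_A\le 0$ and $b=\tau q_B\le 0$ there; then at that point $r = 1 + \varphi_1(a)\varphi_1(b)(a+b)$. I would use two elementary properties of $\varphi_1$ on $(-\infty,0]$, both immediate from $\varphi_1(s)=(\rme^s-1)/s$ together with $\rme^s\ge 1+s$: first, $\varphi_1(s)\in(0,1]$; second, $s\,\varphi_1(s)=\rme^s-1\in(-1,0]$. Splitting
\begin{equation*}
  \varphi_1(a)\varphi_1(b)(a+b)
  = \varphi_1(b)\bigl(a\,\varphi_1(a)\bigr) + \varphi_1(a)\bigl(b\,\varphi_1(b)\bigr)
  = \varphi_1(b)(\rme^a-1) + \varphi_1(a)(\rme^b-1),
\end{equation*}
each summand is the product of a number in $(0,1]$ and a number in $(-1,0]$, hence lies in $(-1,0]$, so the sum lies in $(-2,0]$ and therefore $r\in(-1,1]$. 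In particular $|r|\le 1$ at every point, so $\norm{r}_\infty\le 1$, which completes the argument.

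The only delicate point is that, under \cref{assum:analytical}, the operators $A$, $B$ (hence the symbols $q_A$, $q_B$) need not be bounded, so some care is needed to justify that the functional calculus for $\varphi_1$ commutes with $V$‑conjugation and with multiplication operators; this is exactly what \cref{assum:diag,assum:bdddiag} are designed to bypass, the relevant observation being that the composite symbol $\tau\varphi_1(\tau q_A)\varphi_1(\tau q_B)(q_A+q_B)$ is bounded (it equals the quantity just shown to lie in $(-2,0]$) even though the individual factors are not. I expect the scalar inequality $|1+\varphi_1(a)\varphi_1(b)(a+b)|\le 1$ on $\{a,b\le 0\}$ to be the heart of the matter, and it is elementary; the rest is bookkeeping.
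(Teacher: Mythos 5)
Your proof is correct and follows essentially the same route as the paper: conjugate by $V$ to reduce to the multiplication operator with symbol $R(a,b)=1+\varphi_1(a)\varphi_1(b)(a+b)$ and use $\lvert R(a,b)\rvert\leq 1$ for $a,b\leq 0$ together with \cref{assum:bdddiag}. The only difference is that you actually supply an (elementary and correct) proof of the scalar inequality via $a\varphi_1(a)=\rme^{a}-1\in(-1,0]$ and $\varphi_1(a)\in(0,1]$, which the paper merely asserts as a fact.
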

\begin{proof}
The result follows easily from the estimate
\begin{equation*}
  \norm{R(\tau A,\tau B)^n} \leq \norm{V^{-1}}\norm{V}
  \norm{R(\tau M_{q_A},\tau M_{q_B})}^n
\end{equation*}
and the fact that $\lvert R(a,b) \rvert \leq 1$
for $a,b \leq 0$.
\end{proof}
We are now in a position to prove the following result.
\begin{theorem}\label{thm:ERK2}
  Let
  \cref{assum:commut,assum:analytical,assum:Lipschitz,assum:diag,assum:bdddiag}
  be valid. Furthermore, assume that
  $g(u(t))-g(u(s)) \in \mathcal{D}(BA)$ for all $0\leq t,s\leq T$.
  Then, the following bound on the global error of
  scheme~\cref{eq:ETD2RKdsf} holds:
  \begin{equation*}
    e_n=\norm{u_n-u(t_n)} \leq C\tau^2, \quad 0\le n\tau \leq T.
  \end{equation*}
The constant $C$ is independent of $n$ and $\tau$,
but it depends on $T$.
\end{theorem}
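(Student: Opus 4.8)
The plan is to follow the proof of \cref{thm:ERK2L}, the essential differences being that the linear part of the iteration~\cref{eq:ETD2RKdsf} is now advanced by $R(\tau A,\tau B)=I+\tau\varphi_1(\tau A)\varphi_1(\tau B)(A+B)$ instead of by $\rme^{\tau(A+B)}$, and that the first stage is taken in the $f$-form. Set $h(t)=g(u(t))$. Using the variation-of-constants formula~\cref{eq:voc} at $t=t_{n+1}$ together with the Taylor expansion~\cref{eq:hexpand} of $h$, and inserting $\rme^{\tau(A+B)}u(t_n)=R(\tau A,\tau B)u(t_n)-\rho_{n,0}$ with $\rho_{n,0}=(R(\tau A,\tau B)-\rme^{\tau(A+B)})u(t_n)$, one step of~\cref{eq:ETD2RKdsf} started from $u(t_n)$ produces, for $\epsilon_n=u_n-u(t_n)$, a recursion
\begin{equation*}
  \epsilon_{n+1}=R(\tau A,\tau B)\epsilon_n+\rho_{n,0}+\sum_{i\ge1}\rho_{n,i},
\end{equation*}
where $\rho_{n,1},\dots,\rho_{n,9}$ are exactly the defects appearing in the proof of \cref{thm:ERK2L}, and $\rho_{n,0}$ is the new error of the inexact linear propagator.

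The heart of the argument is the first stage. Since it reads $U_{n2}=u_n+\tau\varphi_1(\tau A)\varphi_1(\tau B)f(u_n)$ with $f(u(t_n))=(A+B)u(t_n)+g(u(t_n))=u'(t_n)$, the new term $\rho_{n,0}$ and the defect $\rho_{n,1}=\tau\bigl(\varphi_1(\tau A)\varphi_1(\tau B)-\varphi_1(\tau(A+B))\bigr)g(u(t_n))$ --- the latter being precisely the defect responsible for the order reduction of ERK2L --- combine into the single term
\begin{equation*}
  \rho_{n,0}+\rho_{n,1}=\tau\bigl(\varphi_1(\tau A)\varphi_1(\tau B)-\varphi_1(\tau(A+B))\bigr)u'(t_n).
\end{equation*}
Unlike $g(u(t_n))$, which generically violates the boundary conditions and yields only $\norm{\rho_{n,1}}\leq C\tau$, the vector $u'(t_n)$ lies in $\mathcal{D}(BA)$: in our setting $u'(t)$ satisfies the homogeneous boundary conditions for every $t$, because $u$ does; this is the structural fact expressed, for the nonlinearity, by the hypothesis $g(u(t))-g(u(s))\in\mathcal{D}(BA)$, the boundary obstruction of $g(u(t))$ being time-independent and cancelling that of $(A+B)u(t)$. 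Hence \cref{prop:phi1phi1}(a) gives $\norm{\rho_{n,0}+\rho_{n,1}}\leq C\tau^3$.

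The remaining defects are estimated exactly as in the proof of \cref{thm:ERK2L}, the second stage of~\cref{eq:ETD2RKdsf} being identical to that of~\cref{eq:ETD2RKdsg}: the $h'(t_n)$-terms produced by the first-stage remainder and by the expansion of $h(t_{n+1})-h(t_n)$ cancel ($\rho_{n,3}+\rho_{n,8}=0$); the difference defect $\tau\bigl(2\varphi_2(\tau A)\varphi_2(\tau B)-\varphi_2(\tau(A+B))\bigr)\bigl(h(t_{n+1})-h(t_n)\bigr)$ is $\mathcal{O}(\tau^3)$ by \cref{prop:phi2phi2}(a), this time because $g(u(t_{n+1}))-g(u(t_n))\in\mathcal{D}(BA)$; the quadrature remainders are $\mathcal{O}(\tau^3)$; and the terms controlled by the Lipschitz continuity of $g$ --- including $2\tau\varphi_2(\tau A)\varphi_2(\tau B)\bigl(g(U_{n2})-h(t_{n+1})\bigr)$, for which $\norm{U_{n2}-u(t_{n+1})}\leq C\norm{\epsilon_n}+C\tau^2$ --- are $\mathcal{O}(\tau\norm{\epsilon_n})+\mathcal{O}(\tau^3)$. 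Thus $\epsilon_{n+1}=R(\tau A,\tau B)\epsilon_n+\mathcal{O}(\tau\norm{\epsilon_n})+\mathcal{O}(\tau^3)$; unrolling this and using the uniform stability bound $\norm{R(\tau A,\tau B)^{n}}\leq C$ of \cref{lem:stab} gives $\norm{\epsilon_n}\leq C\tau\sum_{k=1}^{n-1}\norm{\epsilon_k}+C\tau^2$, and the discrete Gronwall lemma \cite[Lemma 2.15]{HO10} yields $e_n\leq C\tau^2$.

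The main obstacle is the treatment of $\rho_{n,0}+\rho_{n,1}$: each summand is individually only $\mathcal{O}(\tau^2)$ (or worse), so they must be recombined before being estimated, and this recombination works precisely because the $f$-form makes the linear-propagator discrepancy $\rho_{n,0}$ and the $\varphi_1$-splitting defect $\rho_{n,1}$ act on the same vector, whose sum $(A+B)u(t_n)+g(u(t_n))=u'(t_n)$ is boundary-compatible. A secondary point, absent from the parabolic analysis of \cref{thm:ERK2L}, is that no smoothing of the propagator is available here --- the powers $R(\tau A,\tau B)^{n}$ are merely uniformly bounded --- so the local defects must genuinely be $\mathcal{O}(\tau^3)$, with no room for the logarithmic loss that occurs in \cref{thm:ERK2L}.
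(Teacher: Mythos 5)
Your proposal is correct and follows essentially the same route as the paper: the decisive step in both is that the $f$-form makes the $\varphi_1$-splitting defect act on $f(u(t_n))=u'(t_n)\in\mathcal{D}(BA)$ (you reach this by recombining $\rho_{n,0}+\rho_{n,1}$, the paper by writing the stage error directly with $f(u(t_n))$, which is the same algebra), after which stability of $R(\tau A,\tau B)^n$ from \cref{lem:stab}, the $\mathcal{D}(BA)$ hypothesis for $\rho_{n,7}$, and the discrete Gronwall lemma finish the argument exactly as you describe.
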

\begin{proof}
  The proof is very similar to that of~\cref{thm:ERK2L}.
  Setting $h(t)=g(u(t))$, and exploiting the variation-of-constants formula
  with $t=t_{n+1}$, we get for the stage
  \begin{equation*}
    \begin{aligned}
      U_{n2}-u(t_{n+1}) &=R(\tau A,\tau B)(u_n - u(t_n)) \\
      &\quad+\tau(\varphi_1(\tau A)\varphi_1(\tau B)-\varphi_1(\tau(A+B)))f(u(t_n))\\
      &\quad+\tau\varphi_1(\tau A)\varphi_1(\tau B)(g(u_n)-h(t_n))\\
      &\quad-\int_0^\tau \rme^{\tau(A+B)}(h(t_n+s)-h(t_n))ds.
    \end{aligned}
  \end{equation*}
The second term 
is bounded by $C\tau^3$, since $f(u(t))\in \mathcal{D}(BA)$ (it always satisfies
the boundary conditions as $u'(t) = f(u(t))$ does).
For the remaining terms and the rest of the proof, employing the Taylor
expansion~\cref{eq:hexpand} and the stability bound \cref{eq:bdstabf}
in \cref{lem:stab}, we proceed as in the proof of \cref{thm:ERK2L}.
Note that, in this case, the term corresponding to $\rho_{n,5}$
(see formula~\cref{eq:rhonj}) satisfies
$\norm{\epsilon_n}\leq C\tau\norm{\epsilon_n} + C\tau^2$, while $\rho_{n,7}$
is always bounded by $C\tau^3$ (as by assumption
$h(t)-h(s) \in \mathcal{D}(BA)$).
\end{proof}
\begin{remark}\label{rem:diffint}
Although integrators~\cref{eq:ETD2RKg} and \cref{eq:ETD2RKf} are
equivalent, this is \emph{not} the case for \cref{eq:ETD2RKdsg} and \cref{eq:ETD2RKdsf}.
In fact, the split scheme~\cref{eq:ETD2RKdsg} may have order reduction to
one (e.g., if $g(u(t))\notin\mathcal{D}(A)$),
its error constant scales with $g(u(t))$ (see the proof of~\cref{thm:ERK2L} and the experiments
in the next section), and it is exact when $g\equiv 0$.
These assertions are not true for the split scheme~\cref{eq:ETD2RKdsf} which, on the other
hand, in our cases of interest is always second-order accurate (see the numerical experiments in the next section).
\end{remark}

\subsection{Numerical experiments}
In this section, we present some numerical experiments that illustrate the
behavior of the two split integrators \cref{eq:ETD2RKdsg,eq:ETD2RKdsf}
under various circumstances.
We perform the experiments using Matlab R2022a on an Intel Core i7-10750H CPU
(equipped with 16GB of RAM).

We start by considering the following two-dimensional evolution
diffusion-reaction equation in the spatial domain $\Omega=(0,1)^2$
\begin{subequations}\label{eq:semilinear_2d}
  \begin{equation}
  \left\{
  \begin{aligned}
    \partial_t u(t,x,y)&=\Delta u(t,x,y)+
     g(u(t,x,y),x,y),\\
    u(0,x,y)&=16x(1-x)y(1-y),
  \end{aligned}\right.
\end{equation}
subject to homogeneous Dirichlet boundary conditions.
The nonlinear term is
\begin{equation}\label{eq:g2d}
  g(u(t,x,y),x,y)=\frac{\kappa q(x,y)^p}{1+u(t,x,y)^2},
  \quad p\in\{0,1\},
\end{equation}
\end{subequations}
with $\kappa$ a real parameter and
$q(x,y)=16x(1-x)y(1-y)$. Clearly, the nonlinear term \emph{does not}
satisfy the boundary conditions if $p=0$, while it does if $p=1$.
The Laplacian is approximated by standard second-order
finite differences
with $250^2$ interior discretization points. The resulting
matrix is $(I\otimes D+D\otimes I)$, where $I$ is the identity
matrix, $\otimes$ denotes the Kronecker product
and $D$ is the
approximation matrix of the one-dimensional second derivative
operator with homogeneous Dirichlet boundary conditions.
The actions of the matrix exponentials and the
matrix $\varphi$ functions
on a vector $v$ required by the integrators ERK2L and ERK2 are efficiently
computed by directionally splitting the discretized Laplacian
and exploiting
the equivalence
\begin{equation*}
  \varphi_\ell(\tau(I\otimes D))\varphi_{\ell}(\tau(D\otimes I))v=
  \left(\varphi_\ell(\tau D)\otimes\varphi_\ell(\tau D)\right)v=
  \mathrm{vec}\left(\varphi_\ell(\tau D)V\varphi_\ell(\tau D)^{\sf T}\right).
\end{equation*}
Here $\mathrm{vec}$ denotes the operator which stacks the
input matrix by columns and $V$, on the other hand, is a matrix
satisfying $\mathrm{vec}(V)=v$.
Note that the realization of the last expression
requires only to compute \emph{small} matrix functions (i.e., $\varphi$ functions
of the discretization of the \emph{one-dimensional} second derivative operator) and
two dense matrix-matrix products.
The matrix functions are computed once and for all before the time integration
using a standard Pad\'e approximation with modified scaling and
squaring~\cite{SW09}.
We compare the integrators with the (unsplit) ETD2RK
method \cref{eq:ETD2RKf}, in which the required actions of matrix
functions $\varphi_\ell(\tau(I\otimes D+D\otimes I))v$ are computed
at each time step
using the adaptive Krylov subspace solver presented in~\cite{GRT18}.

The results for $\kappa=2$ and $\kappa=0.02$, with final time $T=0.1$
and number of time steps $N=2^j$, with $j=2,4,6,8,10$,
are collected in \cref{fig:global_2d}.
As stated in \cref{thm:ERK2L,thm:ERK2}, when the nonlinear term \cref{eq:g2d}
satisfies the boundary conditions ($p=1$, top row in the figure)
we observe second-order convergence for both the split schemes. Otherwise
($p=0$, bottom row), the scheme ERK2L exhibits order reduction to one.
Moreover, in contrast to ERK2,
we remark that the errors of the schemes ETD2RK and
ERK2L scale according to the parameter $\kappa$ in the nonlinear term.
This is expected, since
the remainders in the global errors of both schemes are proportional to
the nonlinear term (see \cref{rem:diffint}).
Finally, the average wall-clock time per time step of the
split schemes is about 4 milliseconds, while
that of the unsplit scheme is roughly 30 milliseconds.
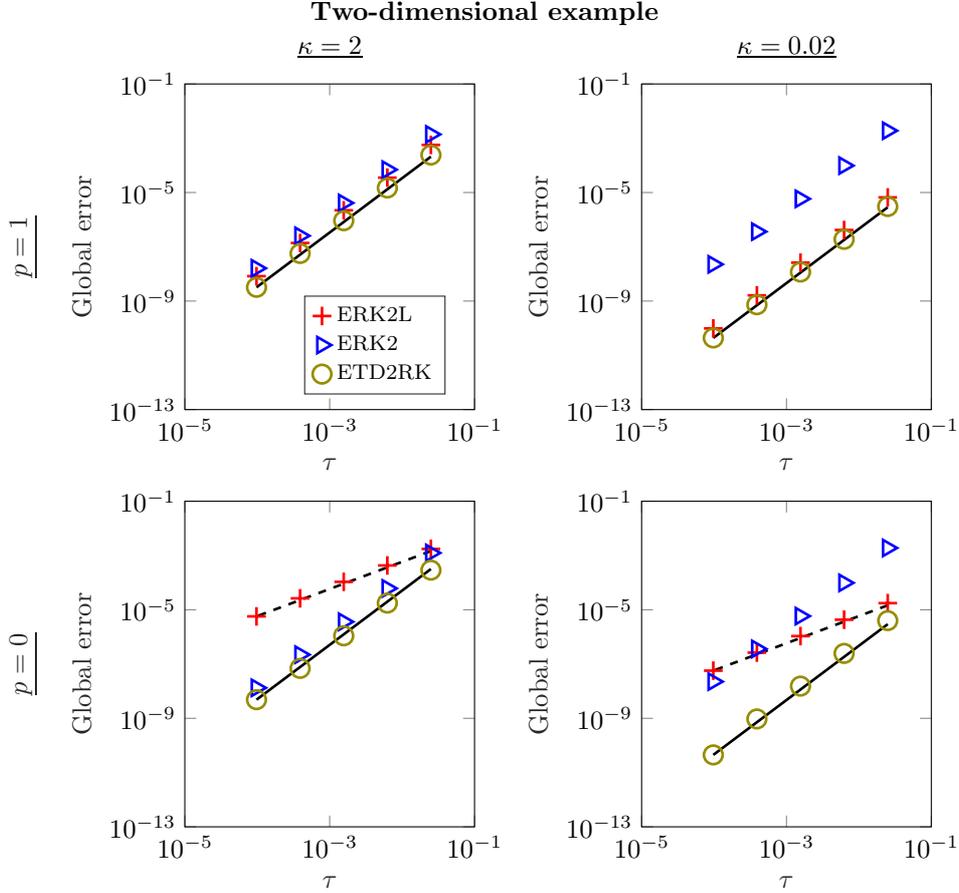
\begin{figure}[!ht]
  \centering
  \textbf{Two-dimensional example}

%
%
%
\begin{tikzpicture}

\begin{axis}[%
width=1.5in,
height=1.7in,
at={(0.758in,0.481in)},
scale only axis,
xmode=log,
xmin=1e-5,
xmax=1e-1,
xminorticks=true,
xlabel style={font=\color{white!15!black}},
xlabel={$\tau$},
xtick = {1e-5,1e-3,1e-1},
xticklabels = {$10^{-5}$,$10^{-3}$,$10^{-1}$},
ymode=log,
ymin=1e-13,
ymax=1e-1,
yminorticks=true,
ylabel style={font=\color{white!15!black}, align=center},
ylabel={$\underline{p=1}$\\\\Global error},
title={$\underline{\kappa=2}$},
axis background/.style={fill=white},
legend style={legend cell align=left, align=left, draw=white!15!black, at={(0.9,0.35)},font=\footnotesize}
]
\addplot [color=red, line width=1.0pt, only marks, mark size=3.5pt, mark=+, mark options={solid, red}]
  table[row sep=crcr]{%
0.025	0.000567670013265437\\
0.00625	3.53413346513842e-05\\
0.0015625	2.20855619689763e-06\\
0.000390625	1.37708323744156e-07\\
9.76562499999999e-05	8.26841584089521e-09\\
};
\addlegendentry{ERK2L}

\addplot [color=blue, line width=1.0pt, only marks, mark size=3.5pt, mark=triangle, mark options={solid, rotate=270, blue}]
  table[row sep=crcr]{%
0.025	0.00138672920411181\\
0.00625	6.96215643753994e-05\\
0.0015625	4.11759065205297e-06\\
0.000390625	2.54156409168482e-07\\
9.76562500000001e-05	1.61673316678712e-08\\
};
\addlegendentry{ERK2}

\addplot [color=olive, line width=1.0pt, only marks, mark size=3.5pt, mark=o, mark options={solid, olive}]
  table[row sep=crcr]{%
0.025	0.00024106152240555\\
0.00625	1.46989234908468e-05\\
0.0015625	9.13694100435068e-07\\
0.000390625	5.67039684373772e-08\\
9.76562499999999e-05	3.2056574894579e-09\\
};
\addlegendentry{ETD2RK}

\addplot [color=black, line width=1pt, forget plot]
  table[row sep=crcr]{%
0.025	0.000210085969229112\\
9.76562499999999e-05	3.2056574894579e-09\\
};
\end{axis}

\end{tikzpicture}%
%
%
%
\begin{tikzpicture}

\begin{axis}[%
width=1.5in,
height=1.7in,
at={(0.758in,0.481in)},
scale only axis,
xmode=log,
xmin=1e-5,
xmax=1e-1,
xminorticks=true,
xlabel style={font=\color{white!15!black}},
xlabel={$\tau$},
xtick = {1e-5,1e-3,1e-1},
xticklabels = {$10^{-5}$,$10^{-3}$,$10^{-1}$},
ymode=log,
ymin=1e-13,
ymax=1e-1,
yminorticks=true,
ylabel style={font=\color{white!15!black}},
ylabel={Global error},
title={$\underline{\kappa=0.02}$},
axis background/.style={fill=white},
legend style={legend cell align=left, align=left, draw=white!15!black, at={(0.9,0.35)},font=\footnotesize}
]
\addplot [color=red, line width=1.0pt, only marks, mark size=3.5pt, mark=+, mark options={solid, red}]
  table[row sep=crcr]{%
0.025	6.60577714817201e-06\\
0.00625	4.14765352646507e-07\\
0.0015625	2.59621928400121e-08\\
0.000390625	1.62005583947966e-09\\
9.76562499999999e-05	9.79155923008788e-11\\
};

\addplot [color=blue, line width=1.0pt, only marks, mark size=3.5pt, mark=triangle, mark options={solid, rotate=270, blue}]
  table[row sep=crcr]{%
0.025	0.00189346597418102\\
0.00625	9.80600461235049e-05\\
0.0015625	5.84986786683616e-06\\
0.000390625	3.61416606059484e-07\\
9.76562500000001e-05	2.25256300556698e-08\\
};

\addplot [color=olive, line width=1.0pt, only marks, mark size=3.5pt, mark=o, mark options={solid, olive}]
  table[row sep=crcr]{%
0.025	3.04991708707791e-06\\
0.00625	1.88197134404211e-07\\
0.0015625	1.17488638817775e-08\\
0.000390625	7.3199374339694e-10\\
9.76562499999999e-05	4.36804481473984e-11\\
};

\addplot [color=black, line width=1pt, forget plot]
  table[row sep=crcr]{%
0.025	2.86264184978791e-06\\
9.76562499999999e-05	4.36804481473984e-11\\
};
\end{axis}

\end{tikzpicture}
%
%
%
\begin{tikzpicture}

\begin{axis}[%
width=1.5in,
height=1.7in,
at={(0.758in,0.481in)},
scale only axis,
xmode=log,
xmin=1e-5,
xmax=1e-1,
xminorticks=true,
xlabel style={font=\color{white!15!black}},
xlabel={$\tau$},
xtick = {1e-5,1e-3,1e-1},
xticklabels = {$10^{-5}$,$10^{-3}$,$10^{-1}$},
ymode=log,
ymin=1e-13,
ymax=1e-1,
yminorticks=true,
ylabel style={font=\color{white!15!black}, align=center},
ylabel={$\underline{p=0}$\\\\Global error},
axis background/.style={fill=white},
legend style={legend cell align=left, align=left, draw=white!15!black, at={(0.9,0.35)},font=\footnotesize}
]
\addplot [color=red, line width=1.0pt, only marks, mark size=3.5pt, mark=+, mark options={solid, red}]
  table[row sep=crcr]{%
0.025	0.00174174166760439\\
0.00625	0.000429593529073487\\
0.0015625	0.000106819069769652\\
0.000390625	2.63121809168487e-05\\
9.76562500000001e-05	5.69568396781647e-06\\
};

\addplot [color=blue, line width=1.0pt, only marks, mark size=3.5pt, mark=triangle, mark options={solid, rotate=270, blue}]
  table[row sep=crcr]{%
0.025	0.00124280078187533\\
0.00625	6.12489803847871e-05\\
0.0015625	3.60592641529544e-06\\
0.000390625	2.21493879348777e-07\\
9.76562500000001e-05	1.32485337700494e-08\\
};

\addplot [color=olive, line width=1.0pt, only marks, mark size=3.5pt, mark=o, mark options={solid, olive}]
  table[row sep=crcr]{%
0.025	0.000288324895363789\\
0.00625	1.75970387186752e-05\\
0.0015625	1.09497660982783e-06\\
0.000390625	6.89137294451747e-08\\
9.76562499999999e-05	4.84813666989937e-09\\
};

\addplot [color=black, dashed, line width=1pt, forget plot]
  table[row sep=crcr]{%
0.025	0.00145809509576102\\
9.76562500000001e-05	5.69568396781647e-06\\
};
\addplot [color=black, line width=1pt, forget plot]
  table[row sep=crcr]{%
0.025	0.000317727484798524\\
9.76562499999999e-05	4.84813666989937e-09\\
};
\end{axis}

\end{tikzpicture}%
%
%
%
\begin{tikzpicture}

\begin{axis}[%
width=1.5in,
height=1.7in,
at={(0.758in,0.481in)},
scale only axis,
xmode=log,
xmin=1e-5,
xmax=1e-1,
xminorticks=true,
xlabel style={font=\color{white!15!black}},
xlabel={$\tau$},
xtick = {1e-5,1e-3,1e-1},
xticklabels = {$10^{-5}$,$10^{-3}$,$10^{-1}$},
ymode=log,
ymin=1e-13,
ymax=1e-1,
yminorticks=true,
ylabel style={font=\color{white!15!black}},
ylabel = {Global error},
axis background/.style={fill=white},
legend style={legend cell align=left, align=left, draw=white!15!black, at={(0.9,0.35)},font=\footnotesize}
]
\addplot [color=red, line width=1.0pt, only marks, mark size=3.5pt, mark=+, mark options={solid, red}]
  table[row sep=crcr]{%
0.025	1.75224163311872e-05\\
0.00625	4.29879573120599e-06\\
0.0015625	1.06829032924134e-06\\
0.000390625	2.63124563063281e-07\\
9.76562500000001e-05	5.69569418945955e-08\\
};

\addplot [color=blue, line width=1.0pt, only marks, mark size=3.5pt, mark=triangle, mark options={solid, rotate=270, blue}]
  table[row sep=crcr]{%
0.025	0.00189152650643562\\
0.00625	9.79449320473723e-05\\
0.0015625	5.84281875318226e-06\\
0.000390625	3.60990021763819e-07\\
9.76562500000001e-05	2.25108823248554e-08\\
};

\addplot [color=olive, line width=1.0pt, only marks, mark size=3.5pt, mark=o, mark options={solid, olive}]
  table[row sep=crcr]{%
0.025	3.98147426544647e-06\\
0.00625	2.45895503181437e-07\\
0.0015625	1.53237112721083e-08\\
0.000390625	9.46512646038398e-10\\
9.76562499999999e-05	4.4835052337433e-11\\
};

\addplot [color=black, dashed, line width=1pt, forget plot]
  table[row sep=crcr]{%
0.025	1.45809771250165e-05\\
9.76562500000001e-05	5.69569418945955e-08\\
};
\addplot [color=black, line width=1pt, forget plot]
  table[row sep=crcr]{%
0.025	2.938309989986e-06\\
9.76562499999999e-05	4.4835052337433e-11\\
};
\end{axis}

\end{tikzpicture}%
  \caption{Observed decay rates of the global errors (at $T=0.1$ in the
    infinity norm)
    of ERK2L~\cref{eq:ETD2RKdsg}, ERK2~\cref{eq:ETD2RKdsf} and
    ETD2RK~\cref{eq:ETD2RKf} for
    the two-dimensional example~\cref{eq:semilinear_2d}.
    The slope of the dashed line is one and that of the solid line is two.}
   \label{fig:global_2d}
\end{figure}

We finally repeat the example in a three-dimensional scenario, i.e., we solve
\begin{subequations}\label{eq:semilinear_3d}
\begin{equation}
  \left\{
  \begin{aligned}
    \partial_t u(t,x,y,z)&=\Delta u(t,x,y,z)+
    g(u(t,x,y,z),x,y,z),\\
     u(0,x,y,z)&=64x(1-x)y(1-y)z(1-z),
  \end{aligned}\right.
\end{equation}
in the spatial domain $\Omega=(0,1)^3$,
subject to homogeneous Dirichlet boundary conditions.
The nonlinear term is
\begin{equation}
  g(u(t,x,y,z),x,y,z)=\frac{\kappa q(x,y,z)^p}{1+u(t,x,y,z)^2},
  \quad p\in\{0,1\},
\end{equation}
\end{subequations}
with $q(x,y,z)=64x(1-x)y(1-y)z(1-z)$.
The Laplacian is approximated by finite differences, this time with
$250^3$ interior discretization points, resulting in the matrix
$(I\otimes I\otimes D+I\otimes D\otimes I+D\otimes I\otimes I)$.
In this case, after directional splitting,
the action of the matrix functions on a vector $v$
is realized by
\begin{multline*}
  \varphi_\ell(\tau(I\otimes I\otimes D))
  \varphi_\ell(\tau(I\otimes D\otimes I))
  \varphi_{\ell}(\tau(D\otimes I\otimes I))v=\\
  \mathrm{vec}\left(V\times_1\varphi_\ell(\tau D)\times_2\varphi_\ell(\tau D)
  \times_3\varphi_\ell(\tau D)\right),
\end{multline*}
where $V$ is an order-3 tensor
satisfying $\mathrm{vec}(V)=v$
(here $\mathrm{vec}$ is the operator which stacks the input tensor by columns)
and $\times_\mu$ denotes the tensor-matrix product along direction $\mu$.
Note that the right-hand side is efficiently realized by three dense
matrix-matrix products (level 3 BLAS operations).
A reader not familiar with the above formalism is invited to
consult~\cite{CC24bis,CCEOZ22,CCZ23bis,C24}.

From the results collected in \cref{fig:global_3d} we draw
similar conclusions to that of the two-dimensional case, i.e., the split
integrators show the expected behavior and convergence order. In this
three-dimensional scenario, the average wall-clock time per time step for the
split methods is about $1.4$ seconds, while
the ETD2RK method did not finish the computation
of the first time step
because of too large memory requirements.
\begin{figure}[!ht]
  \centering
  \textbf{Three-dimensional example}

%
%
\begin{tikzpicture}

\begin{axis}[%
width=1.5in,
height=1.7in,
at={(0.758in,0.481in)},
scale only axis,
xmode=log,
xmin=1e-5,
xmax=1e-1,
xminorticks=true,
xlabel style={font=\color{white!15!black}},
xlabel={$\tau$},
xtick = {1e-5,1e-3,1e-1},
xticklabels = {$10^{-5}$,$10^{-3}$,$10^{-1}$},
ymode=log,
ymin=1e-13,
ymax=1e-1,
yminorticks=true,
ylabel style={font=\color{white!15!black}, align=center},
ylabel={$\underline{p=1}$\\\\Global error},
title={$\underline{\kappa=2}$},
axis background/.style={fill=white},
legend style={legend cell align=left, align=left, draw=white!15!black, at={(0.9,0.35)},font=\footnotesize}
]
\addplot [color=red, line width=1.0pt, only marks, mark size=3.5pt, mark=+, mark options={solid, red}]
  table[row sep=crcr]{%
0.025	0.000997869689627173\\
0.00625	6.25285372145846e-05\\
0.0015625	3.91029166646462e-06\\
0.000390625	2.43552417913961e-07\\
9.76562500000001e-05	1.43274658109815e-08\\
};
\addlegendentry{ERK2L}

\addplot [color=blue, line width=1.0pt, only marks, mark size=3.5pt, mark=triangle, mark options={solid, rotate=270, blue}]
  table[row sep=crcr]{%
0.025	0.00323020357616213\\
0.00625	0.000148088209958361\\
0.0015625	8.58402120446077e-06\\
0.000390625	5.27510057690694e-07\\
9.76562500000001e-05	3.37117719434454e-08\\
};
\addlegendentry{ERK2}

\addplot [color=black, line width=1pt, forget plot]
  table[row sep=crcr]{%
0.025	0.000938964799388487\\
9.76562500000001e-05	1.43274658109815e-08\\
};
\end{axis}
\end{tikzpicture}%
%
%
\begin{tikzpicture}

\begin{axis}[%
width=1.5in,
height=1.7in,
at={(0.758in,0.481in)},
scale only axis,
xmode=log,
xmin=1e-5,
xmax=1e-1,
xminorticks=true,
xlabel style={font=\color{white!15!black}},
xlabel={$\tau$},
xtick = {1e-5,1e-3,1e-1},
xticklabels = {$10^{-5}$,$10^{-3}$,$10^{-1}$},
ymode=log,
ymin=1e-13,
ymax=1e-1,
yminorticks=true,
ylabel style={font=\color{white!15!black}},
ylabel={Global error},
title={$\underline{\kappa=0.02}$},
axis background/.style={fill=white},
legend style={legend cell align=left, align=left, draw=white!15!black, at={(0.9,0.35)},font=\footnotesize}
]
\addplot [color=red, line width=1.0pt, only marks, mark size=3.5pt, mark=+, mark options={solid, red}]
  table[row sep=crcr]{%
0.025	1.04588595807309e-05\\
0.00625	6.5785283992481e-07\\
0.0015625	4.11589652984933e-08\\
0.000390625	2.56382704649782e-09\\
9.76562499999999e-05	1.50854134739475e-10\\
};

\addplot [color=blue, line width=1.0pt, only marks, mark size=3.5pt, mark=triangle, mark options={solid, rotate=270, blue}]
  table[row sep=crcr]{%
0.025	0.00378933609834366\\
0.00625	0.000175668157776893\\
0.0015625	1.02264159518722e-05\\
0.000390625	6.2811808699409e-07\\
9.76562500000001e-05	3.90962420840357e-08\\
};

\addplot [color=black, line width=1pt, forget plot]
  table[row sep=crcr]{%
0.025	9.88637657428623e-06\\
9.76562499999999e-05	1.50854134739475e-10\\
};
\end{axis}
\end{tikzpicture}
%
%
\begin{tikzpicture}

\begin{axis}[%
width=1.5in,
height=1.7in,
at={(0.758in,0.481in)},
scale only axis,
xmode=log,
xmin=1e-5,
xmax=1e-1,
xminorticks=true,
xlabel style={font=\color{white!15!black}},
xlabel={$\tau$},
xtick = {1e-5,1e-3,1e-1},
xticklabels = {$10^{-5}$,$10^{-3}$,$10^{-1}$},
ymode=log,
ymin=1e-13,
ymax=1e-1,
yminorticks=true,
ylabel style={font=\color{white!15!black}, align=center},
ylabel={$\underline{p=0}$\\\\Global error},
axis background/.style={fill=white},
legend style={legend cell align=left, align=left, draw=white!15!black, at={(0.9,0.35)},font=\footnotesize}
]
\addplot [color=red, line width=1.0pt, only marks, mark size=3.5pt, mark=+, mark options={solid, red}]
  table[row sep=crcr]{%
0.025	0.00312348863139994\\
0.00625	0.00077580314896539\\
0.0015625	0.000193157375819211\\
0.000390625	4.71420507622423e-05\\
9.76562500000001e-05	1.11722379227937e-05\\
};

\addplot [color=blue, line width=1.0pt, only marks, mark size=3.5pt, mark=triangle, mark options={solid, rotate=270, blue}]
  table[row sep=crcr]{%
0.025	0.00293653068737174\\
0.00625	0.000132219504785841\\
0.0015625	1.08553640292493e-05\\
0.000390625	7.04562790057546e-07\\
9.76562500000001e-05	2.73096213698753e-08\\
};

\addplot [color=black, dashed, line width=1pt, forget plot]
  table[row sep=crcr]{%
0.025	0.0028600929082352\\
9.76562500000001e-05	1.11722379227937e-05\\
};
\addplot [color=black, line width=1pt, forget plot]
  table[row sep=crcr]{%
0.025	0.00178976334609615\\
9.76562500000001e-05	2.73096213698753e-08\\
};
\end{axis}
\end{tikzpicture}%
%
%
\begin{tikzpicture}

\begin{axis}[%
width=1.5in,
height=1.7in,
at={(0.758in,0.481in)},
scale only axis,
xmode=log,
xmin=1e-5,
xmax=1e-1,
xminorticks=true,
xlabel style={font=\color{white!15!black}},
xlabel={$\tau$},
xtick = {1e-5,1e-3,1e-1},
xticklabels = {$10^{-5}$,$10^{-3}$,$10^{-1}$},
ymode=log,
ymin=1e-13,
ymax=1e-1,
yminorticks=true,
ylabel style={font=\color{white!15!black}},
ylabel={Global error},
axis background/.style={fill=white},
legend style={legend cell align=left, align=left, draw=white!15!black, at={(0.9,0.35)},font=\footnotesize}
]
\addplot [color=red, line width=1.0pt, only marks, mark size=3.5pt, mark=+, mark options={solid, red}]
  table[row sep=crcr]{%
0.025	3.12765019659513e-05\\
0.00625	7.75874318706556e-06\\
0.0015625	1.93158783369245e-06\\
0.000390625	4.71421106738183e-07\\
9.76562500000001e-05	1.11722422124655e-07\\
};

\addplot [color=blue, line width=1.0pt, only marks, mark size=3.5pt, mark=triangle, mark options={solid, rotate=270, blue}]
  table[row sep=crcr]{%
0.025	0.00378621656675193\\
0.00625	0.000175495726940513\\
0.0015625	1.02138189943882e-05\\
0.000390625	6.25043615996856e-07\\
9.76562500000001e-05	3.66100035009697e-08\\
};

\addplot [color=black, dashed, line width=1pt, forget plot]
  table[row sep=crcr]{%
0.025	2.86009400639117e-05\\
9.76562500000001e-05	1.11722422124655e-07\\
};
\addplot [color=black, line width=1pt, forget plot]
  table[row sep=crcr]{%
0.025	0.00239927318943955\\
9.76562500000001e-05	3.66100035009697e-08\\
};
\end{axis}

\end{tikzpicture}%
  \caption{Observed decay rate of the global error (at $T=0.1$ in the
    infinity norm)
    of ERK2L~\cref{eq:ETD2RKdsg} and ERK2~\cref{eq:ETD2RKdsf} for
    the three-dimensional example~\cref{eq:semilinear_3d}.
    The slope of the dashed line is one and that of the solid line is two.}
  \label{fig:global_3d}
\end{figure}
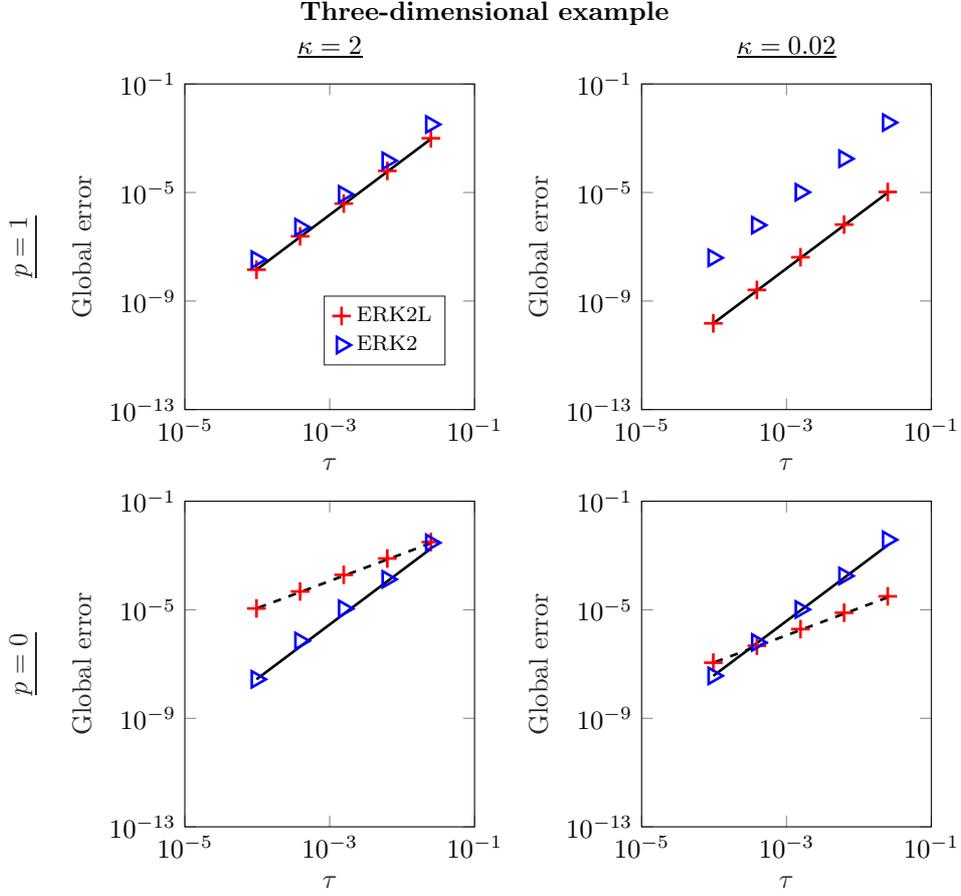

\section{Conclusions and future work}\label{sec:concl}
In this paper, we analyzed split
approximation for the $\varphi$ functions
in an abstract setting.
In particular, we showed that an order reduction
may occur if, for instance, the boundary conditions are not fulfilled by the
involved functions.
We then employed the derived bounds
to prove convergence results for two split versions of
a popular second-order exponential integrator for semilinear
parabolic equations. The accompanying numerical simulations in
two and three spatial dimensions confirm
the theoretical results.

An interesting further development is the study
of high-order  split approximations for
$\varphi$ functions, useful for the effective employment
of exponential schemes of order higher than two.

\section*{Acknowledgments}
Fabio Cassini holds a post-doc fellowship funded by the
Istituto Nazionale di Alta Matematica (INdAM).
Fabio Cassini and Marco Caliari are members of the Gruppo Nazionale
Calcolo Scientifico-Istituto Nazionale di Alta Matematica (GNCS-INdAM).

\bibliographystyle{plain}
\bibliography{CCEO24}

\end{document}